\newtheorem{theorem}{Theorem}[section]
\newtheorem{definition}[theorem]{Definition}
\newtheorem{lemma}[theorem]{Lemma}
\newtheorem{claim}[theorem]{Claim}
\newtheorem{fact}[theorem]{Fact}
\newtheorem{proposition}[theorem]{Proposition}
\newtheorem{corollary}[theorem]{Corollary}
\newtheorem{note}[theorem]{Note}
\newtheorem*{theorem*}{Theorem}
\newenvironment{customthm}[1]
  {\innercustomthm}
  {\endinnercustomthm}
\newenvironment{customprop}[1]
  {\innercustomprop}
  {\endinnercustomprop}
\def\dHH#1{\leavevmode\setbox0=\hbox{#1}\dimen0=\wd0\setbox0=\hbox{.}%
	\advance\dimen0 by -\wd0%
	\hbox{#1\raise-0.5ex\hbox to 0pt{\hss.\kern.5\dimen0}}}%
\newcommand{\nn}{{\mathbb{N}}}
\newcommand{\aar}[1]{{\color{blue} #1}}
\newcommand{\ber}[1]{{\color{red} #1}}
\begin{document}
\title[Increasing paths in graphs]{Increasing Paths in countable graphs}

\author[A.Arman]{Andrii Arman}
\address{School of mathematics, Monash University, Melbourne, VIC, Australia}
\email{Andrii.Arman@monash.edu}
\thanks{}

\author[B. Elliott]{Bradley Elliott}
\address{Department of Mathematics, 
Emory University, Atlanta, GA 30322, USA}
\email{bradley.elliott@emory.edu}

\author[V. R\"{o}dl]{Vojt\v{e}ch R\"{o}dl}
\address{Department of Mathematics, 
Emory University, Atlanta, GA 30322, USA}
\email{rodl@mathcs.emory.edu}
\thanks{The third  author was supported by NSF grant DMS 1764385}




\begin{abstract} 
In this paper we study variations of an old result by M\"{u}ller, Reiterman, and the last author stating that a countable graph has a subgraph with infinite degrees if and only if in any labeling of the vertices (or edges) of this graph by positive integers we can always find an infinite increasing path. We study corresponding questions for hypergraphs and directed graphs. For example we show that the condition that a hypergraph contains a subhypergraph with infinite degrees is equivalent to the condition that any vertex labeling contains an infinite increasing loose path. We also find an equivalent condition for a graph to have a property that any vertex labeling with positive integers contains a path of arbitrary finite length, and we study related problems for oriented graphs and labelings with $\mathbb{Z}$ (instead of $\mathbb{N}$). For example, we show that for every simple hypergraph, there is a labelling of its edges by $\mathbb{Z}$ that forbids one-way infinite increasing paths.
\end{abstract}

\maketitle

\section{Introduction}

For a countable graph $G=(V,E)$ a {\it labeling} (or ordering) of vertices by $\mathbb{N}$ is a bijection from $V$ to $\nn$ and a labeling of edges is a bijection from $E$ to $\nn$. Given a countable graph $G=(V,E)$ and a labeling $\phi$ of $V$ by $\mathbb{N}$, we say $G$ contains an \emph{infinite increasing path} under $\phi$ if there exists an infinite path of vertices $\{v_i\}_{i=1}^\infty$ in $G$ such that $\phi(x_i) < \phi(x_{i+1})$ for all $i\geq 1$. Similarly, given a countable graph $G=(V,E)$ and a labeling $\phi$ of $E$ by $\mathbb{N}$, we say $G$ contains an infinite increasing path under $\phi$ if there exists an infinite path of edges $\{e_i\}_{i=1}^\infty$ in $G$ such that $\phi(e_i) < \phi(e_{i+1})$ for all $i\geq 1$.

In 1982, M\"uller and R\"odl~\cite{MR:82} showed that a countable graph $G$ contains an infinite increasing path in any labeling of vertices by $\mathbb{N}$ if and only if $G$ contains a subgraph with infinite degrees. In their paper M\"uller and R\"odl asked whether or not the condition of having a subgraph with infinite degrees is also necessary for containing an infinite path in any edge labeling of $G$ by $\mathbb{N}$. This was confirmed to be true by Reiterman~\cite{Reit:89} in 1989. Together these two results are formulated below.
\begin{theorem}[\cite{MR:82},\cite{Reit:89}]\label{RM}
Let $G$ be a countable graph.  Then the following are equivalent:
\begin{itemize}
    \item[$(1)$] $G$ contains a subgraph $G^\prime$, such that any $v\in V(G^\prime)$ has infinite degree in $G^\prime$.
    \item[$(2)$] any labeling $\phi$ of the vertices of $G$ with $\mathbb{N}$ contains an infinite increasing path.
    \item[$(3)$] any labeling $\phi$ of the edges of $G$ with $\mathbb{N}$ contains an infinite increasing path.
\end{itemize}
\end{theorem}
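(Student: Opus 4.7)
My plan is to establish the two nontrivial equivalences $(1)\Leftrightarrow(2)$ and $(1)\Leftrightarrow(3)$. The forward implications $(1)\Rightarrow(2)$ and $(1)\Rightarrow(3)$ I would prove by a direct greedy construction. Fix a subgraph $G'\subseteq G$ in which every vertex has infinite degree in $G'$, and a labeling $\phi$. For $(1)\Rightarrow(2)$, pick any $v_1\in V(G')$ and, having chosen $v_1,\dots,v_i$, observe that $v_i$ has infinitely many neighbors in $G'$ but only finitely many are ruled out by being in $\{v_1,\dots,v_{i-1}\}$ or having label $\leq\phi(v_i)$; any remaining neighbor serves as $v_{i+1}$. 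For $(1)\Rightarrow(3)$, the same idea at the edge level works: start at an arbitrary $e_1\in E(G')$ and iteratively extend by an incident edge of larger label whose new endpoint has not yet been visited, which is always possible by infinite endpoint-degree in $G'$.

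For the backward implications $(2)\Rightarrow(1)$ and $(3)\Rightarrow(1)$ I would argue by contrapositive. Assume $G$ contains no subgraph with all infinite degrees and consider the transfinite peeling
\[
V_0=V(G),\qquad V_{\alpha+1}=V_\alpha\setminus\{v\in V_\alpha:\deg_{G[V_\alpha]}(v)<\infty\},\qquad V_\lambda=\bigcap_{\beta<\lambda}V_\beta.
\]
Our hypothesis combined with countability of $G$ forces this process to empty $V$, so each vertex $v$ receives a countable ordinal rank $r(v)=\min\{\alpha:v\notin V_{\alpha+1}\}$, with the crucial property that the ``upward'' neighborhood $\{u\in N(v):r(u)\geq r(v)\}$ is finite. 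From here, my plan for $(2)\Rightarrow(1)$ is to use $r$ to construct a labeling $\phi:V\to\mathbb{N}$ in which each layer $L_\alpha:=V_\alpha\setminus V_{\alpha+1}$ (a locally finite induced subgraph of $G$) is labeled in a way that is ``bad'' for $G[L_\alpha]$, and different layers are interleaved so that along any increasing path the rank-sequence of its vertices is eventually non-increasing, hence eventually constant; the path would then ultimately lie in a single layer $L_{\alpha^*}$, where the intra-layer bad labeling precludes an infinite tail. Implication $(3)\Rightarrow(1)$ runs in parallel, with each edge $e=\{u,v\}$ inheriting the rank $\min\{r(u),r(v)\}$ and the same interleaving carried out on $E(G)$.

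The principal obstacle is executing this interleaving: since $r$ can take countable ordinal values well beyond $\omega$ while the target labels come from $\mathbb{N}$, one cannot simply enumerate the layers in decreasing rank order. Two sub-ingredients are needed. First, a lemma that every countable locally finite graph admits a bad labeling, provable by an alternating-descent construction on an enumeration (so that labels pattern as $\ldots,<,>,<,>,\ldots$ along each long path). Second, a careful bijective scheme that assigns a block of integers to each layer in a schedule cofinal in the range of $r$, arranged so that an increasing path is forbidden to cross into a strictly higher rank infinitely often; the finiteness of the upward neighborhood at each vertex plays the decisive role in controlling such upward crossings. Once the interleaving is in place, the combination of bounded upward crossings and the intra-layer bad labeling completes the contradiction, and a detailed implementation would follow the arguments of M\"uller--R\"odl~\cite{MR:82} and Reiterman~\cite{Reit:89}.
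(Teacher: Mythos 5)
Your forward implications are fine, and your contrapositive set-up (the transfinite peeling, the rank $r$, and the observation that every vertex has only finitely many neighbours of rank at least its own) is the correct skeleton; it is essentially the skeleton used in the paper's Appendix~A, where the directed analogue (Proposition~\ref{divertN}) is proved by mimicking M\"uller--R\"odl and Reiterman. The genuine gap is that the heart of the theorem is exactly the step you defer. Asserting that the layers can be ``interleaved so that along any increasing path the rank-sequence is eventually non-increasing,'' via ``a careful bijective scheme'' in which ``the finiteness of the upward neighborhood plays the decisive role,'' states the desired conclusion but provides no mechanism. What is needed is a combination lemma of the type of Lemma~\ref{lemmaforvertdi}: if $V$ is partitioned into countably many sets $V_1,V_2,\ldots$ such that each $G[V_i]$ admits a labeling with no infinite increasing path and every vertex of $V_i$ has only finitely many neighbours in $\bigcup_{j>i}V_j$, then $G$ admits such a labeling. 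Its proof is the intricate part (the sets $Q_n,P_n$, the thresholds $k_n$, and the verification that along any increasing path the part-index is eventually non-increasing), and on top of it one still needs the transfinite induction on the peeling rank, with separate successor and limit cases, to reduce an arbitrary countable-ordinal rank to an $\omega$-indexed partition to which the lemma applies. None of this appears in your proposal; ``a detailed implementation would follow the arguments of M\"uller--R\"odl and Reiterman'' amounts to citing the theorem rather than proving it. Note also that an increasing path can jump upward in rank at every one of its vertices (each vertex only bounds its own finitely many upward neighbours, with no uniform bound), so ``bounded upward crossings'' is not a consequence of the finiteness property alone but something the labeling must be engineered to enforce.

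The second gap is the claim that $(3)\Rightarrow(1)$ ``runs in parallel'' by giving each edge the rank $\min\{r(u),r(v)\}$ and repeating the same interleaving. The known proof of the edge version (Reiterman's, mimicked for directed graphs in Appendix~A) has a genuinely different structure: from the failure of $(1)$ one extracts a well-ordering $\prec$ of $V$ under which every vertex has only finitely many $\prec$-later neighbours, splits $E$ into the edges on which $\prec$ agrees with the given vertex enumeration and those on which it disagrees, kills the first class (which has finite forward degree at every vertex) by the alternating-blocks labeling, and labels the second class so that along any increasing path the $\prec$-least endpoints form a non-increasing sequence, which stabilizes by well-foundedness and forces three path edges through one vertex, a contradiction. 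Your min-rank interleaving is not obviously adaptable to this, and as written it is an assertion rather than an argument; so both hard implications remain unproved in the proposal.
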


Motivated by Theorem~\ref{RM}, we study possible ways to extend this result. In Section~\ref{sec:hyper} we study this result for simple hypergraphs and loose paths, in Section~\ref{sec:FIN} we find an equivalent condition for a graph to contain an increasing path of arbitrary finite length under any vertex labeling, and finally in Section~\ref{sec:directed} we study labelings with $\mathbb{Z}$ and possible extensions to oriented graphs.  In section~\ref{sec:problems} we state some open problems.
\vspace{0.5cm}

\textsc{Hypergraphs.} Our main motivation for Section~\ref{sec:hyper} is to find a hypergraph extension of Theorem~\ref{RM}. 
We focus solely on simple (also known as linear) hypergraphs in this paper; these are hypergraphs in which each pair of vertices share at most one edge. There exist many examples of non-simple hypergraphs in which every vertex has infinite degree but which do not contain an infinite path. For example, take a complete graph on countably many vertices, and extend each edge by including the same new vertex. The theorems below, however, still hold if ``simple" is replaced by ``each pair of vertices have finite co-degree. Because all hypergraphs considered here are simple, all paths in this section will be loose paths.

\begin{definition}\label{def:infincloosepath}
Given a $k$-uniform hypergraph $H=(V,E)$ and a labeling $\phi$ of $V$ by $\mathbb{N}$, an \emph{infinite increasing path} in $H$ is an infinite path with vertex set $\{v_j: j\geq 1\}$ and edges $e_0,\ldots, e_i, \ldots$, where 
\begin{enumerate}
\item[$(1)$] each $e_i=\{v_{(k-1)i+1}, v_{(k-1)i+2}, \ldots, v_{(k-1)i+k}\}$ for $i\geq 0$,
\item[$(2)$] all $v_j$-s are distinct, and
\item[$(3)$] $\phi(v_{j})< \phi(v_{j+1})$ for $j\geq 1$.
\end{enumerate}
\end{definition}
In the graph case, the condition of containing a subgraph with infinite degrees implies existence of an infinite increasing path in any vertex labeling and this condition is a natural candidate for the hypergraph case; however, the notion of a subgraph with infinite degrees has multiple meanings in the hypergraph case.

\begin{definition}\label{def:C_l}
Let $\ell\leq k$. We say that a $k$-uniform $H=(V,E)$ has property $C_\ell$ if there is a set $V' \subseteq V$ such that $\forall v \in V'$, $|\{e\in E: v\in e\; \text{and} \; |V'\cap e|\geq \ell\}| = \infty$.
\end{definition}
For a $k$-uniform hypergraph $H$, property $C_k$ is perhaps the most natural hypergraph extension of the graph condition, and is equivalent to $H$ containing an induced subhypergraph in which every vertex has infinite degree. It turns out property $C_k$ implies more than just containing an infinite increasing path under any vertex labeling of $V(H)$. It implies existence of what we call an infinite $(k-1)$-branching branching tree.

\begin{definition}\label{def:infbranchingtree}
\begin{enumerate}
\item[]
\item[$(1)$] The \emph{infinite $(k-1)$-branching tree is the unique $k$-uniform} hypergraph so that each pair of vertices is connected by a unique path and so that all vertices have degree 2, except the root vertex, which has degree 1. See figure~\ref{fig:tree}.
\item[$(2)$] In a rooted infinite $(k-1)$-branching tree, $t(v)$ will denote the number of edges on the path from vertex $v$ to the root.
\item[$(3)$] Given a $k$-uniform hypergraph $H=(V,E)$ and a labeling $\phi$ of $V$ by $\mathbb{N}$, an \emph{infinite increasing {$(k-1)$- branching} tree} in $H$ is an infinite {$(k-1)$-branching} tree so that for any $x,y\in V$ with $t(x) < t(y)$, $\phi(x) < \phi(y)$.
\end{enumerate}
\end{definition}

\begin{figure}[H]
              \includegraphics[width=0.5\linewidth]{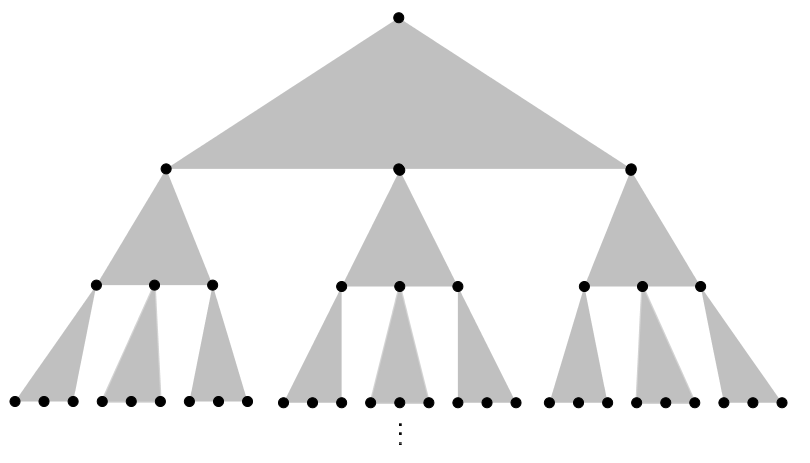}
              \caption{An example of an infinite 3-branching tree, which is a 4-uniform hypergraph. \label{fig:tree}}
          \end{figure}

In Section~\ref{sec:hyper} we consider conditions that imply the existence of infinite increasing paths and the infinite $(k-1)$-branching trees under vertex- and edge-labelings of simple hypergraphs.

\begin{customthm}{\ref{infbranchingtree}}
For a countable, $k$-uniform hypergraph $H= (V,E)$ the following are equivalent
\begin{enumerate}
    \item[$(1)$] $H$ has property $C_k$.
    \item[$(2)$] For any labeling of $V$ with $\mathbb{N}$ there exists an infinite increasing {$(k-1)$-branching} tree in $H$.
    \item[$(3)$] For any labeling of $V$ with $\mathbb{N}$ there exists an infinite increasing path in $H$.
\end{enumerate}
\end{customthm}

For edge-labelings of $k$-uniform hypergraphs, infinite increasing paths and $(k-1)$-branching trees are defined similarly:

\begin{definition}\label{def:infincloosepathE}
Given a $k$-uniform hypergraph $H=(V,E)$ and a labeling $\phi$ of $E$ by $\mathbb{N}$, an \emph{infinite increasing path} in $H$ is an infinite  path $e_1, \ldots, e_i, \ldots$ such that $\{\phi(e_i)\}_{i=1}^\infty$ is increasing.
\end{definition}

\begin{definition}\label{def:infbranchingtreeE}
\begin{enumerate}
\item[]
\item[$(1)$] In a rooted infinite {$(k-1)$-branching} tree, let $t(e)$ be the number of edges on the path from edge $e$ to the root.  E.g. $t(e)=1$ for the edge $e$ that contains the root\ber{.}
\item[$(2)$] Given a labeling $\phi$ of $E$ by $\mathbb{N}$, an \emph{infinite increasing $(k-1)$-branching tree} in $H$ is an infinite {$(k-1)$-branching} tree so that for any $e_1, e_2 \in E$ with $t(e_1) < t(e_2)$, $\phi(e_1) < \phi(e_2)$.
\end{enumerate}
\end{definition}

It turns out that property $C_2$ is a sufficient condition for containing an infinite increasing path under any edge labeling and a necessary condition for containing an infinite increasing $(k-1)$-branching tree.

\begin{customthm}{\ref{thm:hyperedge}}
Let $H= (V,E)$ be a simple, countable, $k$-uniform hypergraph. From the following conditions $\emph{(1)}\Rightarrow \emph{(2)} \Rightarrow \emph{(3)}$.
\begin{enumerate}
    \item[$(1)$] For any labeling $\phi$ of $E$ by $\mathbb{N}$ there is an infinite increasing {$(k-1)$-branching} tree in $H$.
    \item[$(2)$] $H$ has property $C_2$.
    \item[$(3)$] For any labeling $\phi$ of $E$ by $\mathbb{N}$ there is an infinite increasing path in $H$.
\end{enumerate}
\end{customthm}

 Unlike in the vertex-labeling case, with edge-labelings it is not true that the three conditions above are equivalent.  In particular we will observe that ${(2)}$ does not imply ${(1)}$, but we do not know whether ${(2)}$ is equivalent to ${(3)}$. 

\textbf{Question.} \textit{Is it true that $(2) \Leftrightarrow (3)$?}
\vspace{0.5cm}

\textsc{Paths of finite length.} In~\cite{MR:82}, the authors also showed for a graph $G$=(V,E) that for any well-ordered set $\mathcal{L}$ and any labeling of $V$ by $\mathcal{L}$ there exists an arbitrarily long increasing path if and only if the chromatic number of $G$ is infinite.  The main theorem in Section~\ref{sec:FIN} is about restricting $\mathcal{L}$ to $\mathbb{N}$, i.e. it is about finding an equivalent condition for a graph $G$ to contain an increasing path of arbitrary finite length under any labeling of vertices by $\mathbb{N}$. Theorem~\ref{RM} implies that if $G$ has a subgraph with infinite degrees, then in any labeling of vertices by $\nn$ we can find a infinite increasing path. Hence the only interesting case to study is when $G$ does not have a subgraph with infinite degrees.

\begin{definition}\label{def:fin}
We say that a countable graph $G$ has a \emph{property FIN} if for any labeling of vertices of $G$ with $\mathbb{N}$ and any $k\in \mathbb{N}$ there exists an increasing path of length $k$, but there exists of labeling of $G$ with no infinite increasing path.
\end{definition}

\begin{definition}\label{def:chistar}
We define $\chi^*(G)$ to be the minimum $k$, so that $V(G)$ can be partitioned into $k$ classes
$V_1,\ldots ,V_k$ in such a way that 
\begin{enumerate}
\item[$(1)$] each $V_j$ is an independent set, and
\item[$(2)$] each vertex in $V_j$ has finite degree into $V_{1}, V_{2},\ldots ,V_{j-1}$.
\end{enumerate}
If no such $k$ exists, set $\chi^*(G)=\infty$. Clearly, $\chi^*(G)\geq \chi(G)$.
\end{definition}

The main result of Section~\ref{sec:FIN} is

\begin{customthm}{\ref{thm:FIN}}
A graph $G$ has property FIN if and only if $\chi^*(G)=\infty$ and $G$ does not have a subgraph in which all degrees are infinite.
\end{customthm}
We show that $\chi^*$ cannot be changed to $\chi$ in the statement of the theorem above by constructing a bipartite graph $G$ that satisfies FIN. So for property FIN, the chromatic number of a graph has insignificant impact on the existence of an infinite increasing path when $V(G)$ is labeled with $\mathbb{N}$, in contrast to the case when the labels can be elements of any well-ordered set $\mathcal{L}$.

\vspace{0.5cm}

\textsc{Labeling by $\mathbb{Z}$}. In Section~\ref{sec:Z} we consider countable graphs with vertices and edges labeled by $\mathbb{Z}$.  This new type of labeling allows us to consider paths that continue infinitely in both directions.

\begin{definition}\label{2sidedpath}
Given a countable graph $G=(V,E)$ and a labeling $\phi$ of $V$ by $\mathbb{Z}$, an \emph{infinite increasing two-sided path} in $G$ is an infinite path $\{v_i: -\infty < i < \infty\}$, with $\phi(v_i) < \phi(v_{i+1})$ for all $i$.
\end{definition}
For the existence of an infinite increasing two-sided path we find the following equivalent condition.
\begin{customthm}{\ref{thm:graph_v_z_2}}
For a countable graph $G=(V,E)$ the following are equivalent.
\begin{enumerate}
    \item[$(1)$] For every partition of $V$ into infinite sets $V_1, V_2$, there exist $W_1\subseteq V_1$, $W_2 \subseteq V_2$ so that for $i=1,2$ and for all $v\in W_i$, $v$ has infinite degree in $W_i$, and so that there exists an edge $w_1 w_2 \in E$ with $w_1\in W_1$ and $w_2 \in W_2$.
    \item[$(2)$] For any labeling of $V$ with $\mathbb{Z}$ there exists an infinite increasing two-sided path.
\end{enumerate}
\end{customthm}
Perhaps somewhat unexpectedly, for edge labeling the following holds.
\begin{customprop}{\ref{edgeZ1sided}}
For a countable graph $G=(V,E)$ there exists a labeling of $E$ by $\mathbb{Z}$ such that there is no infinite increasing path in $G$.
\end{customprop}

\textsc{Directed Graphs}.  In Section~\ref{sec:directed} we consider directed graphs whose vertices or edges are labeled by $\mathbb{N}$ or $\mathbb{Z}$.  These theorems correspond to theorems about undirected graphs given previously in this paper. Let $D=(V,E)$ be a directed graph where an edge $(u,v)\in E$ is oriented from $u$ to $v$.

\begin{customprop}{\ref{divertN}}
For a directed graph $D=(V,E)$, the following are equivalent.
\begin{enumerate}
\item[$(1)$] There exists an induced subgraph $D'$ of $D$ of which all vertices have infinite out-degree in $D'$.
\item[$(2)$] For any labeling of $V$ by $\mathbb{N}$, there exists an infinite increasing directed path in $D$.
\item[$(3)$] For any labeling of $E$ by $\mathbb{N}$, there exists an infinite increasing directed path in $D$.
\end{enumerate}
\end{customprop}

\begin{customprop}{\ref{thm:oriented_v_z_1}}
For a directed graph $D=(V,E)$, the following are equivalent.
\begin{enumerate}
    \item[$(1)$] For every partition of $V$ into infinite sets $V_1, V_2$, there exists $W_1 \subseteq V_1$, $W_2 \subseteq V_2$ so that for $i=1,2$ and for all $v\in W_i$, $v$ has infinite out-degree in $D[W_i]$.
    \item[$(2)$] For any labeling of $V$ by $\mathbb{Z}$ there is an infinite increasing directed path.
\end{enumerate}
\end{customprop}

\begin{definition}\label{def:inf2dipath}
Given a directed graph $D=(V,E)$ and a labeling $\phi$ of $V$ by $\mathbb{Z}$, we say $D$ contains an \emph{infinite two-sided directed path} if there exists an infinite directed path $\{v_i\}_{-\infty}^\infty$ in $D$ with $\phi(v_i) < \phi(v_{i+1})$ for all $i$.
\end{definition}

\begin{customprop}{\ref{thm:oriented_v_z_2}}
For a directed graph $D=(V,E)$, the following are equivalent.
\begin{itemize}
    \item For any partition of $V$ into infinite sets $V_1, V_2$ there exists $W_1 \subseteq V_1$, $W_2 \subseteq V_2$ so that for all $v\in W_1$, $v$ has infinite out-degree in $D[W_1]$, and for all $v\in W_2$, $v$ has infinite in-degree in $D[W_2]$, and there are vertices $w_i \in W_i$ for $i=1,2$, such that $(w_2,w_1) \in E$.
    \item For any labeling of $V$ by $\mathbb{Z}$ there is an infinite two-sided directed path.
\end{itemize}
\end{customprop}

\begin{customprop}{\ref{oriented_edgeZ1sided}}
For every countable directed graph $D=(V,E)$ there exists a labeling $\phi$ of $E$ by $\mathbb{Z}$, such that there is no infinite increasing directed path in $G$.
\end{customprop}


\section{Countable hypergraphs}\label{sec:hyper}

\textsc{Labeling vertices.}

We start this section by proving the following theorem. Recall Definitions~\ref{def:infincloosepath},~\ref{def:C_l}, and~\ref{def:infbranchingtree}.

\begin{theorem}\label{infbranchingtree}
For a countable, $k$-uniform hypergraph $H= (V,E)$ the following are equivalent.
\begin{enumerate}
    \item[$(1)$] $H$ has property $C_k$.
    \item[$(2)$] For any labeling of $V$ with $\mathbb{N}$ there exists an infinite increasing $(k-1)$-branching tree.
    \item[$(3)$] For any labeling of $V$ with $\mathbb{N}$ there exists an infinite increasing path.
\end{enumerate}
\end{theorem}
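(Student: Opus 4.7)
My plan is to prove the three conditions equivalent by the cycle $(1)\Rightarrow(2)\Rightarrow(3)\Rightarrow(1)$. The implication $(2)\Rightarrow(3)$ is essentially immediate: in any infinite increasing $(k-1)$-branching tree, pick any infinite branch from the root; within each edge along the branch, order the $k$ vertices by their $\phi$-values, and concatenate. Because $t$ strictly increases along the branch and $\phi$ is required to respect $t$, the concatenated vertex sequence is an infinite loose path whose labels are strictly increasing.

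For $(1)\Rightarrow(2)$, I fix an arbitrary labeling $\phi:V\to\mathbb{N}$ and a witness $V'\subseteq V$ of property $C_k$, and build the tree greedily, breadth-first, inside $V'$. Given a partial tree of depth $d$ whose labels strictly increase with depth, I extend each leaf $w$ at depth $d$ by choosing a new edge $e\ni w$ such that $e\subseteq V'$, $e\cap V(T)=\{w\}$, and every $x\in e\setminus\{w\}$ satisfies $\phi(x)>M_d$, where $M_d$ is the current maximum label at depth $d$. Such an edge exists because $w\in V'$ lies in infinitely many edges of $H$ contained in $V'$, and the simplicity of $H$ forces each already-used tree vertex and each vertex of label at most $M_d$ to lie in at most one edge through $w$; so only finitely many edges are excluded. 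Processing leaves one at a time, and adding each new set of $k-1$ vertices to $V(T)$ before moving on, keeps the invariant.

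The main work is in $(3)\Rightarrow(1)$, which I would prove by contrapositive: assuming $\neg C_k$, construct a labeling $\phi:V\to\mathbb{N}$ with no infinite increasing loose path. The backbone is the transfinite peeling $V_0=V$, $V_{\alpha+1}=\{v\in V_\alpha:\deg_{H[V_\alpha]}(v)=\infty\}$, $V_\lambda=\bigcap_{\beta<\lambda}V_\beta$ at limits. This decreasing chain stabilizes at a countable ordinal $\alpha^*$, and if $V_{\alpha^*}$ were nonempty it would witness $C_k$, so $V_{\alpha^*}=\emptyset$. Each $v\in V$ therefore receives a countable rank $\rho(v)$ characterized by $v\in V_{\rho(v)}\setminus V_{\rho(v)+1}$, with the key property that $v$ has finite degree in $H[V_{\rho(v)}]$. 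I would then build $\phi$ inductively, interleaving two kinds of moves: at each stage I use $\neg C_k$ to choose an unlabeled vertex of finite residual degree to receive the next available label in $\mathbb{N}$, while alternating with a fairness rule that forces every vertex of a fixed enumeration of $V$ to eventually be labeled.

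The hardest step will be the verification that this labeling admits no infinite increasing loose path. The plan is: suppose $w_1,w_2,\ldots$ were such a path; then the ordinal sequence $\rho(w_i)$ cannot decrease infinitely often, so it is eventually non-decreasing and, since it is bounded above by $\alpha^*$, eventually constant at some $\alpha$. An infinite tail of the path then lies inside the rank layer $V_\alpha\setminus V_{\alpha+1}$, in which every vertex has finite degree in $H[V_\alpha]$; combining this with the construction's invariant that every already-labeled vertex has only finitely many later-labeled neighbors of rank at least its own, a König-type argument on the tree of forward extensions produces a contradiction. The genuine obstacle is reconciling the ordinal-indexed peeling with a bijection onto $\mathbb{N}$ while simultaneously preventing infinite increasing paths; the swap-labeling of the two-way infinite path $\mathbb{N}$ (where $\phi(i)=i+(-1)^{i-1}$) illustrates in miniature the local reversals that the fairness interleaving must globally achieve, paralleling the classical construction of Müller and R\"odl for graphs.
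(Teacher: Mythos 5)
Your $(1)\Rightarrow(2)$ and $(2)\Rightarrow(3)$ are essentially the paper's arguments (with one small slip in $(2)\Rightarrow(3)$: not every branch of an increasing $(k-1)$-branching tree yields an increasing loose path, because the vertex shared by consecutive path edges must be the $\phi$-maximum of the earlier edge, while the tree condition only compares vertices of different depths; you must follow at each step the child of largest label, i.e.\ ``some branch,'' not ``any branch'').

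The genuine gap is in $(3)\Rightarrow(1)$. Two problems. First, the labeling itself is never constructed: the description ``choose an unlabeled vertex of finite residual degree \ldots alternating with a fairness rule'' is exactly the part that carries all the difficulty of the M\"uller--R\"odl theorem (compare Lemma~\ref{lemmaforvertdi} in the appendix, where the sets $Q_n$, $P_n$ and the ordering $\prec$ they induce are needed precisely to make the verification go through); you acknowledge this obstacle but do not resolve it. Second, the verification rests on a false step: for a putative increasing path $w_1,w_2,\ldots$ you assert that the ordinal sequence $\rho(w_i)$ ``cannot decrease infinitely often, so it is eventually non-decreasing and \ldots eventually constant.'' Well-foundedness only forbids an infinite strictly decreasing run; a sequence of ordinals can have infinitely many descents (e.g.\ $1,0,1,0,\ldots$), so eventual constancy of the rank along the path is not automatic --- it is a property that the labeling must be engineered to force (in Lemma~\ref{lemmaforvertdi} this is exactly the statement $h(x_{i+1})\le h(x_i)$ for large $i$, proved from the specific construction, not from well-ordering alone). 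Without that mechanism the K\"onig-type finish has nothing to stand on. Note that the paper avoids rebuilding this machinery for hypergraphs altogether: from $\neg C_k$ it takes a well-ordering $\prec$ of $V$ in which every vertex is the $\prec$-minimum of only finitely many edges, replaces each hyperedge $e$ by the star of pairs $\{v_e,u\}$ ($v_e$ the $\prec$-minimum of $e$), checks the resulting graph has no subgraph with all degrees infinite, imports the bad vertex labeling from Theorem~\ref{RM}, and shows that an increasing loose path in $H$ would concatenate, edge by edge (through $u_i$, possibly $v_{e_i}$, then $w_i$), into an increasing path in the star graph --- a contradiction. To complete your proof you would either need to carry out a genuine hypergraph analogue of Lemma~\ref{lemmaforvertdi} or adopt such a reduction to the known graph theorem.
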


In the proof of this theorem we use Theorem~\ref{RM}.
\begin{proof}[Proof of Theorem~\ref{infbranchingtree}]
\textbf{ {(1)} $\Rightarrow$ {(2)}}.
Let $V'$ be the subset of $V$ with the property described by \emph{(1)}.
Then for any labeling of $V$ with $\mathbb{N}$ any increasing $(k-1)$-branching tree of depth $\ell$ can be extended to an increasing branching tree of depth $\ell+1$ using vertices of $V'$. Consequently there exists an infinite increasing $(k-1)$-branching tree.

\textbf{ {(2)} $\Rightarrow$ {(3)}}.
This is obvious, since an infinite increasing path can be found in any infinite increasing $(k-1)$-branching tree along some branch of the tree.

\textbf{ {(3)} $\Rightarrow$ {(1)}}.
In a hope to derive a contradiction suppose (3) holds but \emph{(1)} does not. Then there is a well-ordering $\prec$ of $V$ so that for every $v\in V$, the set
$$\{ e\in E: v\in e \text{ and } \forall u\in e \text{ with } u\neq v, v \prec u\}$$
 is finite.  For any edge $e\in E$ let $v_e$ be the minimal vertex of $e$ (i.e. such that $v_e \prec u$ for all $u\in e$, $u\neq v_e$). Let
$$ S_e = \Big\{ \{v_e, u\}: u\in e, u\neq v_e \Big\}$$
be a (2-uniform) edge set of a star with center in $v_e$. Consider a graph $G$ defined by
$$V(G) = V(H),$$
$$E(G) = \bigcup_{e\in E(H)} S_e.$$

Observe that for any $v\in V(G)$, the set
$$\Big\{u: \{v,u\}\in E(G), v\prec u\Big\}$$ remains finite. Therefore there is no subset $V' \subseteq V(G)$ with $\deg_{G[V']}(v) =\infty$ for every $v\in V'$.
Consequently by Theorem~\ref{RM}, there is a vertex-ordering $\phi$ of $V(G)$ by $\mathbb{N}$ so that $G$ contains no infinite increasing path.

For contradiction assume that $P = e_0,\ldots, e_i, \ldots$ is an infinite increasing path in $H$ with respect to $\phi$, and let $V(P) = \{v_1, v_2, \ldots\}$ where $\phi(v_i) < \phi(v_{i+1})$ for $i\geq 1$. In particular $e_i=\{v_{(k-1)i+1}, v_{(k-1)i+2}, \ldots, v_{(k-1)i+k}\}$ for $i\geq 0$. For each $i$ let $u_i = v_{(k-1)i+1}$ and $w_i = v_{(k-1)i+k}$ be the minimal and maximal vertex of $e_i$ with respect to $\phi$. Note that $u_i$ is not necessarily equal to $v_{e_i}$, which is the minimal vertex of $e_i$ with respect to the well-ordering $\prec$.

In order to arrive at a contradiction we construct an infinite increasing path $P'$ in $G$ as a union of edges and 2-paths as follows. For each $i=0,1, \ldots$ define a path $P_i$ to be
\begin{itemize}
\item $u_iw_i$ if $v_{e_i} \in \{u_i, w_i\}$.
\item $u_iv_{e_i}w_i$ if $v_{e_i} \not\in \{u_i, w_i\}$.
\end{itemize} 
Finally, set $P'$ to be a path that is the concatenation of $P_i$-s with increasing labels on vertices, which contradicts our assumption of not (1).
\end{proof}

For a $k$-uniform hypergraph, property $C_\ell$ clearly implies property $C_{\ell-1}$, for $\ell\leq k$.  In order to see that property $C_{\ell-1}$ does not imply property $C_\ell$, consider a $k$-uniform hypergraph $H$ obtained from an infinite complete $(\ell-1)$-uniform hypergraph, the edges of which are extended by pairwise disjoint $(k-\ell+1)$-tuples. $H$ clearly has property $C_{\ell-1}$ but not property $C_\ell$.

However for property $C_2$ we still get a result analogous to Theorem~\ref{infbranchingtree}. 
\begin{definition}
Let $G=(V,E)$ be a $k$-uniform hypergraph. For a labeling $\phi$ of $V$ by $\mathbb{N}$ we say that infinite path $e_0, \ldots, e_i,\ldots$ is \emph{skip-increasing} if there exist vertices~$v_0, v_1, v_2, \ldots$ so that for every $i\geq 0$, $\phi(v_i) < \phi(v_{i+1})$ and  $\{v_i, v_{i+1}\} \subset e_i$.

\end{definition}

By mimicking the proof of Theorem~\ref{infbranchingtree} one can get the following:

\begin{proposition}
For a countable, $k$-uniform hypergraph $H =(V,E)$ the following are equivalent.
\begin{enumerate}
\item[$(1)$] $H$ has property $C_2$.
\item[$(2)$] For any labeling of $V$ with $\mathbb{N}$ there exists an infinite skip-increasing path in $G$.
\end{enumerate}\end{proposition}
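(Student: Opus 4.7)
The plan is to reduce the Proposition to Theorem~\ref{RM} by passing to the ``shadow'' graph of $H$. I define a $2$-uniform graph $G$ on vertex set $V$ by declaring $\{u,v\}$ to be an edge of $G$ exactly when some $e\in E(H)$ contains both $u$ and $v$; by simplicity (linearity) of $H$, such an $e$ is then unique. The central observation is that $H$ has property $C_2$ if and only if $G$ contains a subgraph in which every vertex has infinite degree. Indeed, for any $V'\subseteq V$ and $v\in V'$, the number of edges of $H$ through $v$ with $|e\cap V'|\ge 2$ and the degree of $v$ in $G[V']$ differ by at most a factor of $k-1$ (by a double-counting argument using linearity), hence are finite or infinite together. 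This equivalence will drive both directions of the proof.

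For $(1)\Rightarrow(2)$, I fix a $C_2$-witness $V'\subseteq V$ and a labelling $\phi$ of $V$ by $\mathbb{N}$, and construct the skip-increasing path greedily. Suppose, inductively, that a loose partial path $e_0,\ldots,e_{i-1}$ in $H$ and vertices $v_0,\ldots,v_i\in V'$ with $\phi(v_0)<\phi(v_1)<\cdots<\phi(v_i)$ and $v_j\in e_{j-1}\cap e_j$ have been built. Set $U_i=(e_0\cup\cdots\cup e_{i-1})\setminus\{v_i\}$, a finite set. By property $C_2$, infinitely many edges of $H$ through $v_i$ meet $V'$ in at least one further vertex, and by linearity each vertex of $U_i$ is contained in at most one edge through $v_i$, so only finitely many of these edges meet $U_i$. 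Each of the remaining (infinitely many) edges provides a distinct partner $u\in V'\setminus\{v_i\}$ with $\{v_i,u\}$ contained in it, so infinitely many candidates $u$ are available; selecting one with $\phi(u)>\phi(v_i)$ and setting $v_{i+1}=u$ and $e_i$ equal to the edge containing $\{v_i,v_{i+1}\}$ continues the loose path without violating any of its disjointness requirements.

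For $(2)\Rightarrow(1)$, I argue contrapositively. If $H$ does not have property $C_2$, then by the equivalence of the first paragraph $G$ contains no subgraph with all infinite degrees, so Theorem~\ref{RM} supplies a labelling $\phi\colon V\to\mathbb{N}$ under which $G$ has no infinite increasing path. Any putative infinite skip-increasing path in $H$ under $\phi$, with witness sequence $v_0,v_1,v_2,\ldots$, would yield $\{v_i,v_{i+1}\}\in E(G)$ for every $i$, and the $v_i$ would be pairwise distinct by strict increase of their labels, giving an infinite increasing path in $G$ and a contradiction. The main delicate point, as expected, is the greedy step of $(1)\Rightarrow(2)$, where the loose-path disjointness condition must be preserved; linearity of $H$ is precisely what bounds the forbidden extensions so that infinitely many valid ones remain.
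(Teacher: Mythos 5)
Your proof is correct, and it follows essentially the route the paper intends for this proposition (stated there only as ``mimic the proof of Theorem~\ref{infbranchingtree}''): a greedy extension inside the $C_2$-witness set for $(1)\Rightarrow(2)$, and for $(2)\Rightarrow(1)$ a reduction to Theorem~\ref{RM} via an auxiliary graph whose edges are pairs of vertices lying in a common hyperedge. The only cosmetic difference is that you use the full shadow graph together with a linearity/double-counting argument, whereas the paper's template extracts a well-ordering from the failure of $C_\ell$ and builds stars at $\prec$-minimal vertices; both give the same reduction.
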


\textsc{Labeling edges.} Now, we consider edge-labelings of hypergraphs. Unfortunately we were not able to find sufficient and necessary conditions. Recall Definitions~\ref{def:infincloosepathE} and~\ref{def:infbranchingtreeE}.

\begin{theorem}~\label{thm:hyperedge} Let $H= (V,E)$ be a simple, countable, $k$-uniform hypergraph. Consider the following conditions:
\begin{enumerate}
    \item[$(1)$] For any labeling $\phi$ of $E$ by $\mathbb{N}$ there is an infinite increasing $(k-1)$-branching tree in $H$.
    \item[$(2)$] $H$ has property $C_2$.
    \item[$(3)$] For any labeling $\phi$ of $E$ by $\mathbb{N}$ there is an infinite increasing path in $H$.
\end{enumerate}
Then $(1) \Rightarrow (2) \Rightarrow (3)$.
\end{theorem}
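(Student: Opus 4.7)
The plan is to prove the two implications separately. For $(2) \Rightarrow (3)$, we fix a $V'\subseteq V$ witnessing property $C_2$ and any edge-labeling $\phi: E(H)\to \mathbb{N}$. We will construct an infinite increasing loose path $e_0, e_1, \ldots$ greedily. Starting from any edge $e_0$ with $|e_0 \cap V'| \geq 2$ and an ``exit vertex'' $v_0 \in e_0 \cap V'$, we maintain the invariant that after step $i$ we have a loose path $e_0, \ldots, e_i$ with strictly increasing $\phi$-labels and an exit vertex $v_i \in e_i\cap V'$ distinct from the vertex shared between $e_i$ and $e_{i-1}$. At the next step, we choose $e_{i+1}$ among the infinitely many edges through $v_i$ with $|e_{i+1}\cap V'|\geq 2$ (guaranteed by $C_2$), taking care that $\phi(e_{i+1}) > \phi(e_i)$ and that $e_{i+1}$ is disjoint from $e_0,\ldots,e_{i-1}$ and meets $e_i$ only in $v_i$. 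The label condition excludes only finitely many edges, and by simplicity each previously used vertex $u\neq v_i$ excludes at most one edge through $v_i$ (the unique edge containing $\{u,v_i\}$, if any), so such an $e_{i+1}$ exists. We then pick any $v_{i+1} \in (e_{i+1} \cap V')\setminus\{v_i\}$ to continue.

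For $(1) \Rightarrow (2)$, we argue by contraposition: assuming $H$ does not have property $C_2$, we exhibit an edge-labeling of $H$ with no infinite increasing $(k-1)$-branching tree. Let $G^*$ denote the 2-shadow of $H$, i.e., the graph with vertex set $V$ whose edges are the pairs contained in some hyperedge; since $H$ is simple, each edge of $G^*$ sits inside a unique hyperedge. Failure of $C_2$ is equivalent to $G^*$ having no subgraph in which every vertex has infinite degree: otherwise, such a subgraph on a set $V'$ would directly witness $C_2$. By the edge-labeling direction of Theorem~\ref{RM} applied to $G^*$, there exists a bijection $\psi: E(G^*) \to \mathbb{N}$ under which $G^*$ has no infinite increasing path. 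For each hyperedge $e$, we let $p_e$ be the $\psi$-minimum pair inside $e$; the map $e \mapsto p_e$ is injective by simplicity. Define $\phi: E(H)\to \mathbb{N}$ to be the unique bijection with $\phi(e)<\phi(e') \Leftrightarrow \psi(p_e)<\psi(p_{e'})$.

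The main step will be to derive a contradiction by showing that an infinite increasing $(k-1)$-branching tree $T$ in $H$ under this $\phi$ produces an infinite increasing path in $G^*$ under $\psi$. Along any infinite branch $e_0, e_1, \ldots$ of $T$ the values $\psi(p_{e_i})$ strictly increase, but the pairs $p_{e_i}$ typically do not share vertices; the hard part will be to splice them into a legitimate $\psi$-increasing path in $G^*$. We plan to do this by exploiting both the clique structure of each $e_i$ inside $G^*$ and the freedom of choosing among the $k-1$ children of each edge of $T$, navigating through each hyperedge's clique in $\psi$-increasing order so that the exit vertex (the vertex shared with the next chosen child) is reached without violating monotonicity of $\psi$. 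The flexibility of having $k-1$ candidate exits at each branching level of $T$ is what distinguishes $(1) \Rightarrow (2)$ from the open implication $(3) \Rightarrow (2)$, and it is exactly this flexibility that we expect to make the splicing argument go through.
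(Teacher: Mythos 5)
Your argument for $(2)\Rightarrow(3)$ is correct and is essentially the paper's proof: greedily extend the path through an exit vertex lying in the $C_2$-witness $V'$, noting that the label condition excludes finitely many edges and that simplicity lets each previously used vertex exclude at most one edge through the current exit vertex. The problem is $(1)\Rightarrow(2)$. There the entire weight of the implication rests on the splicing lemma you only announce — that an infinite increasing $(k-1)$-branching tree in $H$ under the min-pair pullback $\phi$ yields a $\psi$-increasing path in the shadow $G^*$ — and this lemma is not just unproven: it is false for an arbitrary $\psi$ with no infinite increasing path, so no clique-navigation or use of the $k-1$ children can make the plan work as stated.

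Here is a counterexample with $k=3$. Let $H$ itself be the infinite $2$-branching tree, with hyperedges $\Delta_s=\{x_s,x_{s0},x_{s1}\}$ indexed by binary strings $s$ (so $H$ is simple, all degrees are at most $2$, and $C_2$ fails). In $G^*$ write $A_s=\{x_s,x_{s0}\}$, $B_s=\{x_s,x_{s1}\}$, $C_s=\{x_{s0},x_{s1}\}$, and choose $\psi$ so that its order lists the finite classes in the succession $C$-pairs of depth $0$, $C$-pairs of depth $1$, $\{A,B\}$-pairs of depth $1$, $\{A,B\}$-pairs of depth $0$, then $C$-pairs of depths $2,3$, $\{A,B\}$-pairs of depth $3$, then of depth $2$, and so on. Then each $C_s$ is the $\psi$-minimum pair of its hyperedge and these minima increase with depth, so $p_{\Delta_s}=C_s$ and your pullback $\phi$ is level-monotone: $H$ itself is an infinite increasing $2$-branching tree under $\phi$. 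Yet $G^*$ has no infinite increasing path under $\psi$: any infinite path eventually descends through consecutive depths, entering each $\Delta_s$ at $x_s$; it cannot use $C_s$ after $A_s$ or $B_s$ (since $C_s$ is the minimum of its triangle), so it uses exactly one of $A_s,B_s$ per depth, and every crossing from depth $2j$ to depth $2j+1$ forces a decrease because all depth-$(2j+1)$ $\{A,B\}$-labels precede all depth-$2j$ ones. So $\psi$ is a legitimate output of Theorem~\ref{RM}, the branching flexibility is fully available (every branch of $T$ is present), and still there is no increasing path to splice together; your intended contradiction cannot be reached. Repairing this would require building the edge-labeling with genuinely more structure, which is what the paper does without passing to the shadow: from $\neg C_2$ it extracts a well-ordering $\prec$ of $V$ under which each vertex lies in only finitely many edges where it is not $\prec$-maximal, splits $E$ into two types using an auxiliary $\mathbb{N}$-ordering of $V$, labels the second type primarily by the second-least vertex $s(e)$ with ties broken by reversed $\prec$ on $\ell(e)$, and shows that an increasing branching tree would contain a branch along which the vertices $\ell(e)$ strictly $\prec$-decrease, contradicting well-foundedness.
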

\begin{proof}
We start with showing the easier implication ${(2)}\Rightarrow {(3)}$. Let $H$ have property $C_2$, that is, there is a set $V' \subseteq V$ such that $\forall v \in V'$, $|\{e\in E: v\in e\; \text{and} \; |V'\cap e|\geq 2\}| = \infty$.
Let $\phi$ be any labeling of $E$ by $\mathbb{N}$. Set $e_0=\emptyset$ and let $e_1\in E$ be such that $|V'\cap e|\geq 2$.
Assume we have constructed an increasing path $e_1, e_2, \ldots, e_j$ and assume there exists some vertex $v \in e_j\setminus e_{j-1}$ that also belongs to $V'$. Since $H$ is simple and $v$ has infinite degree, there are infinitely many edges $e$ containing $v$ with $e_i \cap (e \setminus \{v\})= \emptyset$ for all $i=1, \ldots, j$ and with $\phi(e)>\phi(e_j)$.
Choose any one of these edges to be $e_{j+1}$, and observe that there exists some vertex $u \in e_{j+1} \setminus e_{j}$ with $u\in V'$.
In this inductive way we form the infinite increasing path $e_1, e_2, e_3, \ldots$.

Now, we show {(1)} $\Rightarrow$ {(2)}. We will show that if $H$ does not satisfy {(2)} it does not satisfy {(1)} as well.
Due to our assumption of not (2), there exists a well-ordering $\prec$ of $V$ so that for every $v\in V$, the set
$$\{\{v, u_1, \ldots, u_{k-1} \} \in E: v \prec u_i \text{ for some } i \}$$
is finite.  Consequently we have
\begin{fact}\label{factstar}
For any vertex $v\in V$, there are finitely many edges in  $E$ in which $v$ is not the maximal vertex with respect to $\prec$.
\end{fact}

Consider also an arbitrary labeling of $V$ by $\mathbb{N}$, where order is denoted by $<$.
If $e = \{v_1, v_2, \ldots, v_k \}$ with $v_1 < v_2 < \cdots < v_k$, we say $\ell(e)=v_1$ and $s(e)=v_2$, for the least and second-least vertices.
Divide the edges of $E$ into Type I edges ($E_I$) and Type II edges ($E_{II}$) so that
$$E_I = \{e: \ell(e) \text{ is \emph{not} the maximum vertex of } e \text{ with respect to } \prec\}$$
$$E_{II} = E \setminus E_I.$$
Due to Fact~\ref{factstar} and the fact that every natural number has only finitely many predecessors, we infer that
\begin{proposition}\label{propType1}
Any vertex $v\in V$ can be in only finitely many Type I edges.
\end{proposition}

We will construct separate labelings of $E_I$ and $E_{II}$, each of which forbids an infinite increasing $(k-1)$-branching tree.
Let $H_I = (V,  E_I)$.
Note that each vertex of $H_I$ has finite degree.

\begin{claim}\label{claim3} 
There is a labeling $\phi$ by $\mathbb{N}$ of $H_I$ with no infinite increasing path.
\end{claim}
\begin{proof}[Proof of Claim]
The edge set  $E_I$ can be partitioned into finite sets  $E_1, E_2, \ldots$ so that for all edges $e \in E_i$, any edge $f\in E_I$ incident to $e$ is in  $E_j$, $j\leq i+1$. (E.g. we may set $E_i$ to be the edges that are distance $i-1$ away from some fixed edge $e$.)
Give to  $E_I$ a labeling $\phi$ by $\mathbb{N}$ so that for all $i\in \mathbb{N}$,
$$\phi(e) < \phi(f) \text{ for all } e\in  E_{2i-1}, f\in  E_{2i},$$
$$\phi(f) > \phi(e) \text{ for all } f\in  E_{2i}, e\in  E_{2i+1}.$$
If an increasing path in $E_I$ uses an edge from any  $E_{2i-1}$ or $E_{2i}$, then the path cannot later use any edges from $E_j$ for any $j\geq 2i+1$. Such a path must be finite, so $H_I$ can contain no infinite increasing path.
\end{proof}

Let $H_{II} = (V,  E_{II})$.
We now construct a labeling $\psi$ by $\mathbb{N}$ on $E_{II}$ in the following way.  Suppose $e_1, e_2\in H_{II}$.
\begin{enumerate}
\item If $s(e_1) < s(e_2)$, then $\psi(e_1) <  \psi(e_2)$.
\item If $s(e_2) = s(e_1)$ and $\ell(e_1) \prec \ell(e_2)$, then $\psi(e_1) > \psi(e_2)$.
\end{enumerate}
Recall that we want to show that $H_{II}$ does not contain an infinite increasing $(k-1)$-branching tree with respect to $\psi$. Assume the contrary, that is that $H_{II}$ contains such a tree $T$. We now recursively construct a branch of $T$ that is a path $e_1, e_2, e_3, \ldots $ (with $t(e_i)=i$) satisfying $\ell(e_1) \succeq \ell(e_2) \succeq \ell(e_3) \succeq \cdots$.
Assume we have constructed a path $e_1, \ldots, e_r$ with $\ell(e_1) \succeq \ell(e_2)\succeq \cdots \succeq \ell(e_r)$ for some $r\geq 1$.
We choose to extend the path through either $\ell(e_r)$ or $s(e_r)$.
At least one of these vertices is not in $e_{r-1}$ (or not a root in case $r=1$), and that vertex is incident to an edge $e_{r+1}\in E(T)$ with $\psi(e_{r+1}) > \psi(e_r)$.
Recall that $\ell(e)$ is the least and $s(e)$ is the second-least vertex in $e$ with respect to $<$. We also note that $\ell(e)\succ v$ for all $v\in e$, $v\neq \ell(e)$ for any Type II edge $e$. Let $v = e_r \cap e_{r+1}$ and consider the following exhaustive cases, of which only three can happen.
\begin{enumerate}
\item If $v = \ell(e_r) = \ell(e_{r+1})$, then of course $\ell(e_r) \succeq \ell(e_{r+1})$.
\item It is impossible that $v = \ell(e_r)$ and $v \neq \ell(e_{r+1})$. If so then $v \geq s(e_{r+1})> \ell(e_{r+1})$, implying $s(e_{r+1}) \leq v= \ell(e_r) < s(e_r)$. This contradicts that $\psi(e_{r+1}) > \psi(e_r)$.
\item If $v = s(e_r)$ and $v = \ell(e_{r+1})$, and since $\ell(e_r) \succ s(e_r)$ for all Type II edges, \\$\ell(e_r) \succ v=\ell(e_{r+1})$.
\item If $v = s(e_r) = s(e_{r+1})$, then since $\psi(e_{r+1}) > \psi(e_{r})$, by the definition of $\psi$, it must be that $\ell(e_{r}) \succ \ell(e_{r+1})$.
\item It is impossible that $v = s(e_r)$ and $v > s(e_{r+1})$. If so then $s(e_{r+1}) < s(e_r)$, which contradicts to $\psi(e_{r+1}) > \psi(e_r)$.
\end{enumerate}
So for all possible ways in which $e_{r+1}$ intersects $e_r$, we have $\ell(e_r) \succeq \ell(e_{r+1})$, and so $\ell(e_1) \succeq \ell(e_2) \succeq \ell(e_3) \succeq \cdots$.
There are no consecutive equalities (otherwise three edges on a loose path would intersect at a single vertex).
But this is a contradiction, since $\succ$ is a well-ordering and so every decreasing sequence must be finite. Hence, $H_{II}$ has no infinite increasing $(k-1)$-branching tree with respect to $\psi$.

Having shown that for both $H_I$ and $H_{II}$ there exist edge-orderings $\phi$ and $\psi$ by $\mathbb{N}$ that forbid any infinite increasing $(k-1)$-branching trees, we will construct an edge-ordering $\gamma$ of $H$ forbidding any infinite increasing $(k-1)$-branching trees.

Let $\gamma$ be a labeling of $E$ by $\mathbb{N}$ so that
$$\text{ for all } e,f\in  E_{I}, \gamma(e) < \gamma(f) \text{ iff } \phi(e) < \phi(f),$$
$$\text{ for all } e,f\in  E_{II}, \gamma(e) < \gamma(f) \text{ iff } \psi(e) < \psi(f), \text{ and}$$
$$ \text{ for all } e\in  E_{I}, f\in  E_{II} \text{ with } e \text{ incident to } f, \gamma(e) < \gamma(f),$$
which is possible because each vertex of $e$ has finite degree in $H_I$.

The labeling $\gamma$ inherits the labeling of $E_{I}$ by $\phi$ and the labeling of  $E_{II}$ by $\psi$.
Neither of these edge sets contains an infinite increasing $(k-1)$-branching tree with their respective labeling.
Suppose $H$ contains an infinity increasing $(k-1)$-branching tree $T$ by $\gamma$.  At least one edge $e$ of $E_{II}$ must be used, and then every edge following $e$ in $T$ must also be from  $E_{II}$, by the definition of $\gamma$.
This would require that  $E_{II}$ contains an infinite increasing $(k-1)$-branching tree, a contradiction.  So there exists a labeling of $E$ by $\mathbb{N}$ containing no infinite increasing $(k-1)$-branching tree.

\end{proof} 

We also notice that in the theorem above ${(2)}\not \Rightarrow {(1)}$. Consider a complete countable graph $G$ with vertices $V=\{v_1,v_2, \ldots\}$ and complement every edge $e=\{v_i,v_j\}$ by $k-2$ new vertices $u_{i,j,1}, u_{i,j,2}, \ldots, u_{i,j,k-2}$ to form a $k$-uniform hypergraph $H=(V\cup U,E)$ with $E=\{\{v_i,v_j,u_{i,j,1}, u_{i,j,2}, \ldots, u_{i,j,k-2}\}: i,j\in \mathbb{N}\}$. The original vertex set $V$ satisfies \emph{(2)}. On the other hand, there is no infinite $(k-1)$-branching tree in $H$ because every hyperedge contains $k-2$ vertices of degree 1.       

It remains unknown to us if {(2)} is equivalent to {(3)} or not, even for the case $k=3$.


\section{Property FIN}\label{sec:FIN}
We first prove the following claim concerning $\chi^*$.  Recall the definitions of property FIN (Definition~\ref{def:fin}) and $\chi^\star(G)$ (Definition~\ref{def:chistar}).

\begin{definition}\label{def:fin_k}
We say that for an integer $k$ a graph $G$ has property FIN$_k$ if for any labeling of vertices of $G$ with $\mathbb{N}$ there exists an increasing path with $k$ vertices, but there exists a labeling with no increasing path of length $k+1$.  If no such $k$ exists, then $G$ has property FIN.
\end{definition}

The following is a simple modification of the well-known Gallai-Hasse-Roy-Vitaver Theorem \cite{Gal:68}, \cite{Has:65}, \cite{Roy:67}, \cite{Vit:62}.

\begin{claim}\label{cl:fin_k}
A graph $G$ has property FIN$_k$ if and only if $\chi^*(G) = k$.
\end{claim}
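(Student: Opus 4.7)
The plan is to mimic the proof of the classical Gallai--Hasse--Roy--Vitaver theorem, with $\chi$ replaced by $\chi^*$. It suffices to prove the two implications
\textbf{(A)} $\chi^*(G) \leq k$ implies that some labeling of $V$ has no increasing path on $k+1$ vertices, and
\textbf{(B)} if some labeling of $V$ has no increasing path on $k+1$ vertices then $\chi^*(G) \leq k$.
Combining (A) with value $k-1$ and (B) with value $k$ gives that the minimum $k$ witnessing FIN$_k$ is exactly $\chi^*(G)$, which is the desired claim.

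For (B), given a labeling $\phi$ with no increasing path on $k+1$ vertices, I would define $L(v)$ to be the number of vertices in the longest increasing path ending at $v$, and set $V_j := \{v : L(v) = j\}$ for $j = 1,\ldots,k$. Each $V_j$ is independent, since an edge $uv$ inside $V_j$ with $\phi(u)<\phi(v)$ would extend an $L(u)$-vertex increasing path to one ending at $v$, contradicting $L(u)=L(v)$. For any $v\in V_j$, every neighbor $u \in V_1\cup\cdots\cup V_{j-1}$ satisfies $L(u) < L(v)$, and hence $\phi(u) < \phi(v)$ (otherwise extending a longest increasing path to $v$ by $v\to u$ would contradict $L(u)<L(v)$); since only finitely many labels lie below $\phi(v)$, there are only finitely many such neighbors. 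So this partition witnesses $\chi^*(G) \leq k$.

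For (A), I would take a partition $V_1,\ldots,V_k$ witnessing $\chi^*(G)\leq k$ and orient each edge from its endpoint in the lower-indexed class to its endpoint in the higher-indexed class (well-defined because each $V_j$ is independent). The resulting digraph is acyclic, has finite in-degree at every vertex, and every directed path uses at most $k$ vertices. The main point is to produce a labeling $\phi:V\to\mathbb{N}$ consistent with this orientation when $V$ is infinite. The key observation here is that the bounded path length combined with finite in-degrees forces each vertex $v$ to have only finitely many ancestors: the reverse-BFS tree from $v$ has depth at most $k-1$ and finite branching at every node, hence is finite. A greedy construction then works: enumerate $V$ as $v_1,v_2,\ldots$, and at each stage locate the smallest $i$ for which $v_i$ is still unlabeled, and assign the next available labels to all currently unlabeled ancestors of $v_i$ in a topological order, ending with $v_i$ itself. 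Every vertex is labeled after finitely many stages, and any increasing path under $\phi$ is a directed path in the orientation, hence uses at most $k$ vertices.

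The only real obstacle is this last topological-ordering step in (A); once the finite-ancestry observation is in place, the rest is a routine adaptation of the finite Gallai--Hasse--Roy--Vitaver argument. Both implications together yield the equivalence.
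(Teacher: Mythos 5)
Your proof is correct and takes essentially the same route as the paper's: your level function $L(v)$ (longest increasing path ending at $v$) yields the same Gallai--Hasse--Roy--Vitaver-style partition that the paper obtains by iteratively peeling off maximal vertices, and your orientation-plus-labeling argument for (A) is the construction the paper describes as labeling $V_1,\ldots,V_k$ in succession so that each vertex exceeds its finitely many lower-class neighbors. Your finite-ancestor/greedy step just makes that labeling explicit in the infinite setting, where the paper is terse, but the underlying idea is identical.
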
 
\begin{proof}
Let $G = (V,E)$ have property FIN$_k$ and consider a labeling of $V$ by $\mathbb{N}$ so that $G$ does not contain any increasing path of length larger than $k$.
Let $V_k$ be the set of vertices that are maximal (i.e. are not adjacent to any vertex of larger label).
Note that $V_k$ cannot be empty, or else $G$ would contain an increasing path of infinite length.
Delete $V_k$ from $V$ to obtain the set $U_{k-1} = V - V_k$ and consider $G_{k-1}$, the subgraph of $G$ induced on $U_{k-1}$.
Let $V_{k-1}$ be the set of all vertices that are maximal within $G_{k-1}$.  We delete $V_{k-1}$ from $U_{k-1}$ to obtain $U_{k-2}$ and let $G_{k-2}$ be induced on $U_{k-2}$.
We continue this way until we exhaust all vertices
of $G$. Note that for all $j\in[k-1]$ and every vertex $v\in V_j$, there is a vertex $u\in V_{j+1}$ adjacent to $v$ and with larger label (otherwise, $v$ itself should belong to $V_{j+1},$ not $V_j$).
Now we show that $V_0$ is empty. If there were some vertex $v_0$ in $V_0$, we would have an increasing path of length $k+1$ beginning at $v_0$.
Consequently $V_1, V_2, \ldots, V_k$ constitutes a partition of $V$.

Further, observe that each set $V_i$, $1\leq i\leq k$, is an independent set, since two adjacent vertices in some graph cannot both be maximal.
We say that vertex $w$ dominates $v$ if $\{w,v\}$ is an edge and label of $w$ is larger than of $v$. Since vertices are labeled by integers, each vertex $v\in V_i$ can dominate only finitely many vertices of $V_{1}, V_{2},\ldots ,V_{i-1}$ and cannot be dominated by any vertices in those sets, so $v$ has finite degree into $V_{1}, V_{2},\ldots ,V_{i-1}$. So, if $G$ has property FIN$_k$, then $\chi^*(G) \leq k$.

On the other hand if $\chi^*(G) =k$ with partition $V_1, \ldots, V_k$, we can label vertices of $V_1$ arbitrarily and then label vertices of $V_{2},  \ldots, V_k$ in succession, always making sure that the label of any vertex $v\in V_j$ is higher than that of its neighbors from $V_{1},\ldots, V_{j-1}$.
This is possible because $v$ has finite degree into $V_{1},\ldots, V_{j-1}$ by $\chi^*(G)$.
This labeling prevents an increasing path of length $k+1$.
Therefore $G$ has property FIN$_j$ for some $j\leq k$. 

So, if $\chi^*(G) =k$, then we have that $G$ has property FIN$_j$ for some $j\leq k$ and then $k=\chi^*(G) \leq j\leq k$, which implies that $G$ has property FIN$_k$. Similarly, if $G$ has property FIN$_k$, then $\chi^*(G)=k$.
\end{proof}

Now we are ready to prove Theorem~\ref{thm:FIN}.\\
\begin{theorem}\label{thm:FIN}
A graph $G$ has property FIN if and only if $\chi^*(G)=\infty$ and $G$ does not have a subgraph in which all degrees are infinite.
\end{theorem}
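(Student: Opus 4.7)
The plan is to derive Theorem~\ref{thm:FIN} directly from Claim~\ref{cl:fin_k} together with Theorem~\ref{RM}, by splitting the two conjuncts on the right-hand side and handling each with one of these tools.

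For the necessity direction, assume $G$ has property FIN. First, if $G$ contained a subgraph in which every vertex has infinite degree, then Theorem~\ref{RM} would force every labeling $\phi\colon V(G)\to\mathbb{N}$ to contain an infinite increasing path, contradicting the clause of FIN that requires some labeling to avoid such a path. Second, suppose for contradiction that $\chi^*(G)=k<\infty$, witnessed by a partition $V_1,\ldots,V_k$ as in Definition~\ref{def:chistar}. The construction in the second half of the proof of Claim~\ref{cl:fin_k} produces, from any such partition, a labeling of $V(G)$ by $\mathbb{N}$ that admits no increasing path of length $k+1$, by labeling the $V_j$ in order and making each vertex larger than its finitely many neighbors in $V_1\cup\cdots\cup V_{j-1}$. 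This labeling contradicts the clause of FIN requiring that for every labeling and every $k$, an increasing path of length $k$ exist.

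For the sufficiency direction, assume $\chi^*(G)=\infty$ and $G$ has no subgraph with all degrees infinite. The second hypothesis, combined with Theorem~\ref{RM}, yields a labeling of $V(G)$ by $\mathbb{N}$ with no infinite increasing path, which verifies the second clause of property FIN. For the first clause, suppose toward a contradiction that some labeling $\phi\colon V(G)\to\mathbb{N}$ admits no increasing path of length $k+1$ for some fixed $k\in\mathbb{N}$. Then, repeating verbatim the peeling argument from the first half of the proof of Claim~\ref{cl:fin_k}, we inductively let $V_j$ be the maximal vertices of the subgraph induced on $V\setminus(V_{j+1}\cup\cdots\cup V_k)$, starting with $V_k$. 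Exactly as in that proof, the absence of an increasing path of length $k+1$ ensures $V_0=\emptyset$, so $V_1,\ldots,V_k$ partition $V(G)$; each $V_j$ is independent, and each $v\in V_j$ has only finitely many neighbors in $V_1\cup\cdots\cup V_{j-1}$ because $\phi$ sends vertices to $\mathbb{N}$ and every natural number has finitely many predecessors. Thus $\chi^*(G)\leq k$, contradicting $\chi^*(G)=\infty$.

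There is no real obstacle to this argument beyond bookkeeping: the substantive content already lives inside Claim~\ref{cl:fin_k} and Theorem~\ref{RM}, and the task is just to line up the two halves of each claim with the two conjuncts of the right-hand side of Theorem~\ref{thm:FIN}. The only mild care needed is to invoke the constructive halves of the proof of Claim~\ref{cl:fin_k} (namely the explicit labeling built from a $\chi^*$-partition, and the explicit partition built from a path-avoiding labeling) rather than Claim~\ref{cl:fin_k} itself, since FIN differs from FIN$_k$ in quantifying over all $k$.
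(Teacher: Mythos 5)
Your proof is correct and follows essentially the same route as the paper: Theorem~\ref{RM} disposes of the infinite-degree-subgraph clause, and Claim~\ref{cl:fin_k} (equivalently, the two constructive halves of its proof) disposes of the $\chi^*$ clause. The only cosmetic difference is that you unwind the labeling construction and the peeling argument from the proof of Claim~\ref{cl:fin_k} directly, whereas the paper cites the claim itself and passes through property FIN$_k$; the content is the same.
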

\begin{proof}
First, suppose $G$ has property FIN. Since for every labeling of $V(G)$ by $\mathbb{N}$ there is an increasing path of arbitrary finite length, $G$ does not have property FIN$_k$ for any $k$, and hence by Claim~\ref{cl:fin_k}, $\chi^\star(G) = \infty$. Also, $G$ cannot contain a subgraph in which all degrees are infinite, for otherwise Theorem~\ref{RM} implies that every labeling of $V(G)$ by $\mathbb{N}$ would contain an infinite increasing path, which would contradict FIN.

Now suppose $G$ does not have property FIN.  Then either there exists a labeling of $V(G)$ by $\nn$ forbidding increasing paths of length $k$ for some $k$, or for every labeling of $V(G)$ there is an infinite increasing path.  In the former case, $G$ has property FIN$_j$ for some $j<k$, implying by Claim~\ref{cl:fin_k} that $\chi^\star(G)=j < k$.  In the latter case, Theorem~\ref{RM} implies $G$ has a subgraph in which all degrees are infinite.
\end{proof}
Clearly, if there is a labeling of $V(G)$ by $\mathbb{N}$ that prevents an increasing path of $k+1$ vertices, then $\chi(G)\leq k$. Unfortunately, the converse is not true. 

\begin{proposition}\label{chistarnotchi}
There exists a bipartite graph $H$ that has property FIN.
\end{proposition}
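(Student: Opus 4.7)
The plan is to appeal to Theorem~\ref{thm:FIN}: it suffices to produce a countable bipartite graph $H$ with $\chi^*(H)=\infty$ and no subgraph whose vertices all have infinite degree. I would take
\[
V(H)=\bigcup_{n\geq 1}V_n,\qquad V_n=\{(n,i):i\geq 1\},
\]
with $(n,i)\sim(n+1,j)$ whenever $j\leq i$. Every edge runs between consecutive levels, so $H$ is bipartite (colour by the parity of $n$); each $(n,i)$ with $n\geq 2$ has infinitely many ``upward'' neighbours in $V_{n-1}$, namely all $(n-1,j)$ with $j\geq i$, but only $i$ ``downward'' neighbours in $V_{n+1}$.

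To show that $H$ has no subgraph with all infinite degrees, I would use a minimality argument. Given nonempty $V'\subseteq V$, let $n_0=\min\{n:V'\cap V_n\neq\emptyset\}$ and pick any $(n_0,i_0)\in V'$. Its upward neighbours lie in $V_{n_0-1}$, which is disjoint from $V'$, and its downward neighbours in $V'$ form a subset of $\{(n_0+1,j):j\leq i_0\}$, so at most $i_0$ in number. Hence $(n_0,i_0)$ has finite degree in $H[V']$, so no such $V'$ satisfies condition $(1)$ of Theorem~\ref{RM}.

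The heart of the proof is showing $\chi^*(H)=\infty$. Suppose for contradiction that $c\colon V\to[k]$ witnesses $\chi^*(H)\leq k$, and set $M_n:=\max_{i}c(n,i)\in[k]$. I claim $M_{n-1}\geq M_n+1$ for every $n\geq 2$. Fix $i$ with $c(n,i)=M_n$. The finite-back-degree condition applied to $(n,i)$ says that among its infinitely many neighbours $\{(n-1,j):j\geq i\}$, all but finitely many satisfy $c(n-1,j)\geq c(n,i)=M_n$. Since each such neighbour is adjacent to $(n,i)$, independence of the colour class $M_n$ forces $c(n-1,j)\neq M_n$, which upgrades the inequality to $c(n-1,j)>M_n$ for cofinitely many $j\geq i$. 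Such $j$ exist, so $M_{n-1}\geq M_n+1$. Iterating, $M_1\geq M_2+1\geq\cdots\geq M_{k+1}+k\geq k+1$, contradicting $M_1\leq k$.

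The main obstacle is finding the right edge rule. The asymmetric condition $j\leq i$ is what allows the two requirements to coexist: the bounded downward degree simultaneously kills any potential subgraph of infinite degrees and keeps the graph bipartite, while the unbounded upward degree tightly couples colours across adjacent levels and breaks every finite stratification via the monotone chain $M_1>M_2>\cdots$. Once the construction is in place, both verifications are short.
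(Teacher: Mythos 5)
Your proposal is correct, but it takes a genuinely different route from the paper. You build a layered tower of half-graphs (levels $V_n$, with $(n,i)$ joined to $(n+1,j)$ exactly when $j\leq i$) and then verify the two conditions of Theorem~\ref{thm:FIN}: the minimal-level argument correctly shows no induced subgraph has all degrees infinite (the bottom-most vertex of any $V'$ has at most $i_0$ neighbours inside $V'$), and your $\chi^*$ argument is sound — for a putative partition into $k$ classes, a vertex of maximal colour $M_n$ on level $n\geq 2$ has infinitely many neighbours on level $n-1$, only finitely many of which may have smaller colour, and none of which may have colour $M_n$ by independence, giving $M_{n-1}\geq M_n+1$ and the contradiction $M_1\geq k+1$. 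The paper instead constructs a single bipartite graph on two copies of $\mathbb{N}$ by interleaving half-graphs between dyadic pieces $L_i$, $R_j$, and proves property FIN directly: it shows there is no subgraph of infinite degrees (so Theorem~\ref{RM} supplies a labeling without an infinite increasing path) and then greedily builds, in an arbitrary labeling, an increasing path of length $k$ starting from a vertex of $L_{k+1}$, stepping down one index at a time through infinite neighbourhoods. The two approaches buy slightly different things: the paper's greedy argument is self-contained at the level of paths and makes visible where the long increasing paths live, while your reduction through Theorem~\ref{thm:FIN} (which is proved just before this proposition, so the appeal is legitimate) is shorter and makes the intended moral explicit, namely that a bipartite graph — so $\chi=2$ — can have $\chi^*=\infty$, which is exactly the separation the proposition is meant to exhibit. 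One cosmetic remark: bipartiteness of your $H$ follows simply because all edges join consecutive levels, not from the bounded downward degree as your closing paragraph suggests; the degree asymmetry is what drives the other two verifications.
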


Hence, we cannot replace condition $\chi^*(G)=\infty$ with $\chi(G)=\infty$ in the statement of Theorem~\ref{thm:FIN}. To prove Proposition~\ref{chistarnotchi}, we first define a ``half-graph" and then construct graph $H$.

\textbf{Definition of a half-graph}

Start with $I$ and $F$ -- two copies of $\mathbb{N}$. 
Let $G=(V,E)$ be a graph  with  $V(G)=I\cup F$ and $$E=\{\{x,y\} \; : \; x \in I, \; y \in F, \; x\leq y\}.$$ Graph $G$ defined on $I\cup F$ in such a way is called the half-graph and is denoted by $G[I,F]$. Note that all vertices of $I$ have infinite degree in $G$, while vertices of $F$ have finite degree. See Figure~\ref{G_graph} for an illustration.
 
\begin{figure}[h]
\begin{minipage}{\linewidth}
      \centering
      \begin{minipage}{0.41\linewidth}
          \begin{figure}[H]
              \includegraphics[width=\linewidth]{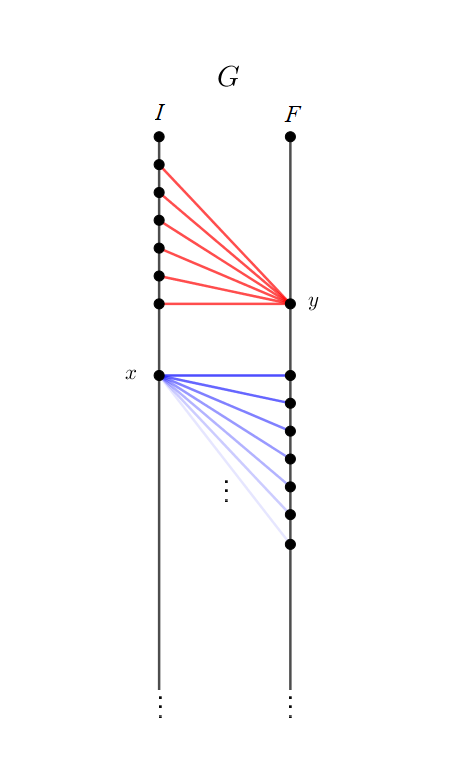}
              \caption{Each $x\in I$ has infinite degree, each $y \in F$ has finite degree. \label{G_graph}}
          \end{figure}
      \end{minipage}
      \hspace{0.01\linewidth}
      \begin{minipage}{0.55\linewidth}
          \begin{figure}[H]
              \includegraphics[width=\linewidth]{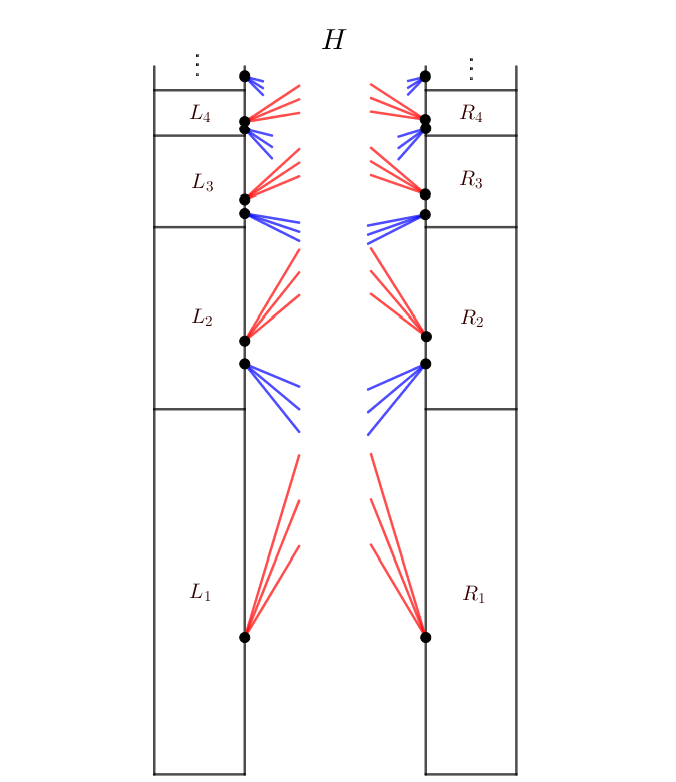}
              \caption{Each $x\in V_i^L$ has finite degree to $U_i^R$ and infinite degree to $V_j^R$ for $1\leq j<i$. \label{H_graph}}
          \end{figure}
      \end{minipage}
  \end{minipage}
\end{figure}
 
\textbf{Construction of $H$.}

 Let $L$ and $R$ be again two copies of $\mathbb{N}$. Then set $V(H)=L \cup R$. 
To define $E(H)$ we need some preliminary definitions. For each $i\geq 1$, 
$$L_i=\{n \in L \; :  n = 2^{i-1}(2j-1) , \;  j \in \nn\}, $$
$$R_i=\{n \in R \; :  n = 2^{i-1}(2j-1) , \;  j \in \nn\}, $$

Note, that from this definition we have that $L=\bigcup_{i=1}^\infty L_i$ and $R=\bigcup_{i=1}^\infty R_i$. Notice that we can view each $L_i$ (or $R_i$) as a copy of $\mathbb{N}$ with the order inherited from $L$ (or $R$).

We define a graph $H_1$ on $V(H)$ where, for each $i$, there is a copy of $G$ between sets $L_i$ and $R_j$ for each $j>i$ ($L_i$ is the set with finite degrees in $G$). More precisely, 
$$E(H_1)=\bigcup_{i=1}^\infty \left( \bigcup_{j=i+1}^\infty G[R_j, L_i] \right).$$
Each vertex in $L$ has finite degree in $H_1$, and each vertex in $R$ has infinite degree in $H_1$.
Similarly we define a graph $H_2$ on $V(H)$ where, for each $i$, there is a copy of $G$ between sets $R_i$ and $L_j$ for each $j>i$ ($R_i$ is the set with finite degrees in $G$). More precisely, 
$$E(H_2)=\bigcup_{i=1}^\infty \left( \bigcup_{j=i+1}^\infty G[L_j, R_i] \right).$$
Each vertex in $R$ has finite degree in $H_2$, and each vertex in $L$ has infinite degree in $H_2$. Also note that $E(H_1)\cap E(H_2)=\emptyset$.
Then $$E(H) = E(H_1) \cup E(H_2).$$

Notice that, by a construction, $H$ is bipartite. See Figure~\ref{H_graph} for an illustration.
\begin{proof}[Proof of Proposition~\ref{chistarnotchi}]
\begin{claim}
$H$ does not contain a subgraph with infinite degrees.
\end{claim}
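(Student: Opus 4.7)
The plan is to argue by contradiction: suppose there is a set $W\subseteq V(H)$ such that every $v\in W$ has infinite degree in $H[W]$, and write $W_L=W\cap L$, $W_R=W\cap R$. Since $H$ is bipartite, a vertex in $W_L$ can only have neighbors in $W_R$ and vice versa, so for every vertex of $W$ to have infinite degree in $H[W]$ both $W_L$ and $W_R$ must be infinite.

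The key structural properties to extract from the construction are the following: (i) every vertex in $L$ has finite degree in $H_1$ and every vertex in $R$ has finite degree in $H_2$ (this is stated outright in the definitions of $H_1$ and $H_2$); and (ii) every $H_2$-edge incident to a vertex of $L_i$ has its other endpoint in some $R_{i'}$ with $i'<i$, while every $H_1$-edge incident to a vertex of $R_j$ has its other endpoint in some $L_{i'}$ with $i'<j$. Observation (ii) is immediate from $E(H_1)=\bigcup_i\bigcup_{j>i}G[R_j,L_i]$ and $E(H_2)=\bigcup_i\bigcup_{j>i}G[L_j,R_i]$, since a vertex in $L_i$ appears in an $H_2$-copy $G[L_i,R_{i'}]$ only when $i'<i$, and a vertex in $R_j$ appears in an $H_1$-copy $G[R_j,L_{i'}]$ only when $i'<j$.

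Now define $i^*=\min\{i:W_L\cap L_i\neq\emptyset\}$ and $j^*=\min\{j:W_R\cap R_j\neq\emptyset\}$; both minima exist because $W_L,W_R$ are nonempty and the indices live in $\mathbb{N}$. Pick $v\in W_L\cap L_{i^*}$. By (i), $v$ has only finitely many neighbors in $W$ via $H_1$, so its infinite degree in $H[W]$ forces infinitely many neighbors in $W_R$ via $H_2$; by (ii), all such neighbors lie in $\bigcup_{i'<i^*}R_{i'}$, and in particular $W_R\cap R_{i'}\neq\emptyset$ for some $i'<i^*$, which yields $j^*<i^*$. A symmetric argument applied to any $r\in W_R\cap R_{j^*}$, using that $r$ has finite $H_2$-degree and that its $H_1$-neighbors lie in $L$-levels strictly less than $j^*$, yields $i^*<j^*$. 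These two strict inequalities contradict each other, completing the proof. The only real obstacle is the bookkeeping needed to extract properties (i) and (ii) cleanly from the construction; once they are in hand, the minimum-level trick delivers the contradiction in one step.
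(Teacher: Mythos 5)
Your proof is correct and follows essentially the same strategy as the paper's: both exploit that $L$-vertices have finite $H_1$-degree, $R$-vertices have finite $H_2$-degree, and that the remaining edges always go to strictly smaller-indexed levels, then derive a contradiction from a minimal level. The only cosmetic difference is that you take minima on both sides ($i^*$ and $j^*$) and get two contradictory inequalities, while the paper minimizes only over $L$ and follows one edge to a vertex of finite degree.
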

\begin{proof}
Assume that $H$ does contain a subgraph $H^\prime$ with all degrees infinite. Let $x \in L_t$ be a vertex of $H^\prime$ with the smallest possible $t$.

Notice that $x$ has a finite degree into each $\bigcup_{i=t+1}^\infty R_i$, since there are only finitely many vertices in $R$ less than or equal to $x$. Also, $x$ has no neighbours in $R_t$. So in order for $x$ to have infinite degree in $H^\prime$, $H^\prime$ has to contain infinitely many vertices of $\bigcup_{i=1}^{t-1}R_i$. Let $y$ be the smallest vertex in $V(H^\prime)\cap \bigcup_{i=1}^{t-1}R_i$ such that $xy \in E(H^\prime)$. Assume that $y \in R_s$ for some $s<t$. Now, $y$ has finite degree in $\bigcup_{i=s+1}^\infty L_i$ and none of the vertices of $H^\prime$ are in $\bigcup_{i=1}^{s-1}L_i$ (by minimality of $t$). Hence, $y$ has a finite degree in $H^\prime$, a contradiction.
\end{proof}
Hence by Theorem~\ref{RM} there is a labeling of $V(H)$ by $\nn$ with no infinite increasing path.
\begin{claim}
Any labeling of $H$ by $\mathbb{N}$ contains an increasing path of arbitrary length.
\end{claim}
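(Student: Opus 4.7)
The plan is to exploit the level structure built into $H$. The key structural fact I would isolate first is: for each $i \geq 2$, every vertex $v \in L_i$ has infinitely many neighbors in $R_{i-1}$, and symmetrically every vertex $v \in R_i$ has infinitely many neighbors in $L_{i-1}$. This is immediate from the definitions, since $E(H_2)$ contains, as the summand corresponding to $j=i$ and $i'=i-1$, a copy of the half-graph $G[L_i, R_{i-1}]$ in which $L_i$ is the infinite-degree side; the symmetric statement for $R_i$ comes from an analogous summand of $E(H_1)$. So from any vertex at level $\geq 2$ one may step to a neighbor one level lower on the opposite side, with infinitely many candidates to choose from.

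Given this fact, I would build the increasing path greedily. Fix a labeling $\phi$ and a target length $k$. Choose any $v_1 \in L_k$ (nonempty, since $L_k$ is an infinite subset of $\mathbb{N}$). Suppose inductively that $v_1, \ldots, v_i$ have been constructed with $i < k$, forming an increasing path in $H$ with $v_j$ at level $k+1-j$, alternating between $L$ and $R$. Since $v_i$ sits at level $k+1-i \geq 2$, the fact above supplies infinitely many neighbors of $v_i$ at level $k-i$ on the opposite side; only finitely many of these carry a $\phi$-label at most $\phi(v_i)$, so I can choose $v_{i+1}$ with $\phi(v_{i+1}) > \phi(v_i)$. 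After $k-1$ such extensions I obtain an increasing path $v_1, \ldots, v_k$ on $k$ vertices whose levels are $k, k-1, \ldots, 1$ in sequence; vertex-disjointness comes for free because the $v_j$ all sit at distinct levels.

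Combined with the preceding claim that $H$ has no subgraph with all infinite degrees (which via Theorem~\ref{RM} supplies a labeling of $V(H)$ with no infinite increasing path), this completes the verification that $H$ has property FIN.

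The only subtle design point is that one must start the descent at level exactly $k$, not lower: vertices of $L_1$ and $R_1$ have only finite degree in $H$, so the downward step cannot be continued past level $1$, and starting at a lower level leaves too little room to fit $k$ vertices. I do not anticipate a serious obstacle beyond this; the work amounts to careful bookkeeping of which half-graph summand to invoke at each step, together with the trivial observation that a vertex with infinitely many neighbors has infinitely many with $\phi$-label exceeding any given value.
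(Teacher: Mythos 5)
Your argument is correct and is essentially the paper's own proof: both exploit that each vertex of $L_i$ (resp.\ $R_i$) is on the infinite-degree side of a half-graph into $R_{i-1}$ (resp.\ $L_{i-1}$), and descend one level per step, always choosing among infinitely many neighbours one whose label exceeds the current one. The only difference is a harmless off-by-one (the paper starts the descent at $L_{k+1}$ to get a path of length $k$ in edges, you start at $L_k$ to get $k$ vertices), which is immaterial since $k$ is arbitrary.
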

\begin{proof}
Notice that any vertex $v$ of $L_{t+1}$ has infinite neighborhood in $R_t$ for any $t\geq 2$. 

Let $V(H)$ be labeled by $\mathbb{N}$ and let $k$ be an integer. Then for any $v_1 \in L_{k+1}$ there is $v_2 \in R_{k}$ adjacent to $v_1$ and with larger label. Similarly for $v_2 \in R_{k}$ there is $v_3 \in L_{k-1}$ adjacent to $v_2$ and with larger label. Continuing this way we obtain an increasing path of length $k$. 
\end{proof}
Therefore, $H$ has property FIN.
\end{proof}


\section{Labeling by $\mathbb{Z}$}\label{sec:Z}
Now we consider labelings using all the integers instead of just the natural numbers. Given a countable graph $G=(V,E)$ and a labeling $\phi$ of $V$ by $\mathbb{Z}$, we say $G$ contains an \emph{infinite increasing path} if there exists an infinite path of vertices $\{v_i\}_{i=1}^\infty$ in $G$ such that $\phi(x_i) < \phi(x_{i+1})$ for all $i\geq 1$.

\begin{proposition}\label{thm:graph_v_z_1}
For a countable graph $G=(V,E)$ the following are equivalent
\begin{enumerate}
    \item[$(1)$] For every subset $V_1 \subseteq V$ where both $V_1$ and $V\setminus V_1$ are infinite, there exist $W_1\subseteq V_1$ such that all $v\in W_1$ have infinite degrees in $W_1$.
      \item[$(2)$] For any labeling of $V$ with $\mathbb{Z}$ there exists an infinite increasing path.
\end{enumerate}
\end{proposition}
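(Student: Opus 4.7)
My plan is to prove both directions by reducing to the natural-number labeling case, so that Theorem~\ref{RM} can be applied directly. Both implications rest on the simple observation that a strictly increasing infinite sequence of integers is eventually positive, so the tail of any infinite increasing path under a $\mathbb{Z}$-labeling lies in the preimage of $\mathbb{Z}_{>0}$.

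For $(1)\Rightarrow(2)$, suppose $\phi\colon V\to\mathbb{Z}$ is a bijection. Set $V_1=\phi^{-1}(\{1,2,3,\ldots\})$ and $V_2=V\setminus V_1=\phi^{-1}(\{0,-1,-2,\ldots\})$; since $\phi$ is a bijection and $V$ is countably infinite, both sets are infinite. By hypothesis~(1), there exists $W_1\subseteq V_1$ such that every $v\in W_1$ has infinite degree in $G[W_1]$. The restriction of $\phi$ to $W_1$ is a bijection onto an infinite subset of $\mathbb{Z}_{>0}$; composing with the order-preserving bijection of that subset onto $\mathbb{N}$ yields a labeling $\phi'\colon W_1\to\mathbb{N}$. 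Since $G[W_1]$ is itself a subgraph in which every vertex has infinite degree, Theorem~\ref{RM} provides an infinite increasing path in $G[W_1]$ under $\phi'$, and order preservation makes it also an infinite increasing path in $G$ under $\phi$.

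For $(2)\Rightarrow(1)$, I would argue contrapositively. Assume~(1) fails, so there exist infinite sets $V_1,V_2\subseteq V$ partitioning $V$ such that $G[V_1]$ contains no subgraph in which every vertex has infinite degree. Applying Theorem~\ref{RM} to $G[V_1]$ yields a labeling $\psi\colon V_1\to\mathbb{N}$ with no infinite increasing path in $G[V_1]$. Define $\phi\colon V\to\mathbb{Z}$ by setting $\phi|_{V_1}=\psi$ (viewed inside $\mathbb{Z}_{>0}$) and letting $\phi|_{V_2}$ be any bijection onto $\mathbb{Z}_{\leq 0}$. Any infinite increasing path in $G$ under $\phi$ would, from some index onward, have only positive labels and hence lie entirely in $V_1$; its tail would be an infinite increasing path in $G[V_1]$ under $\psi$, contradicting the choice of $\psi$. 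Therefore~(2) fails as well.

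There is no substantive obstacle here: the statement is essentially a bookkeeping wrapper around Theorem~\ref{RM}, exploiting the splitting of $\mathbb{Z}$ into its positive and non-positive halves. The only small care needed is the order-preserving identification between infinite subsets of $\mathbb{Z}_{>0}$ and $\mathbb{N}$, which allows Theorem~\ref{RM} to be invoked on $G[W_1]$ or $G[V_1]$ as appropriate.
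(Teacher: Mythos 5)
Your proof is correct and follows essentially the same route as the paper: split $\mathbb{Z}$ into positive and non-positive labels, apply Theorem~\ref{RM} to $G[W_1]$ (respectively to $G[V_1]$ in the contrapositive), and note that any infinite increasing path under a $\mathbb{Z}$-labeling eventually has positive labels. The only difference is that you spell out the order-preserving identification with $\mathbb{N}$, which the paper leaves implicit.
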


\begin{proof}
One implication is clear.
Suppose $(1)$ and let $\phi$ be any labeling of $V$ by $\mathbb{Z}$.
We let
$$V_1 = \{v\in V: \phi(v) > 0\}$$
$$V_2 = \{v\in V: \phi(v) \leq 0\}.$$
By $(1)$ there exists a set $W_1\subseteq V_1$ so that for all $v\in W_1$, $v$ has infinite degree in $G[W_1]$.
Then by Theorem~\ref{RM}, there exists an infinite increasing path in $G[W_1]$ and hence in $G$.

Now suppose $(1)$ does not hold.
Then there exists a partition of $V$ into infinite sets $V_1, V_2$ so every subset $W_1\subseteq V_1$ has a vertex of finite degree in $G[W_1]$.
Then by Theorem~\ref{RM}, there exists a vertex-labeling of $G[V_1]$ by $\nn$ containing no infinite increasing path.
Label the vertices of $V_2$ arbitrarily by $\mathbb{Z}\setminus \nn$. Since all vertices with positive label are contained in $G[V_1]$, and since $G[V_1]$ contains no infinite increasing path, neither does $G$.
\end{proof}

Labeling using $\mathbb{Z}$ instead of $\nn$ allows us to consider paths that continue infinitely in both directions. Recall Definition~\ref{2sidedpath}.

\begin{theorem}\label{thm:graph_v_z_2}
For a countable graph $G=(V,E)$ the following are equivalent
\begin{enumerate}
    \item[$(1)$] For every partition of $V$ into infinite sets $V_1, V_2$, there exist $W_1\subseteq V_1$, $W_2 \subseteq V_2$ so that for $i=1,2$ and for all $v\in W_i$, $v$ has infinite degree in $G[W_i]$, and so that there exists an edge $w_1 w_2 \in E$ with $w_1\in W_1$ and $w_2 \in W_2$.
    \item[$(2)$] For any labeling of $V$ with $\mathbb{Z}$ there exists an infinite increasing two-sided path.
\end{enumerate}
\end{theorem}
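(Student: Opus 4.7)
My plan is to prove the two implications separately. The main tool, beyond Theorem~\ref{RM}, is the following strengthening of its sufficient direction, which I would establish first: \emph{if $H$ is a countable graph in which every vertex has infinite degree and $\psi : V(H) \to \mathbb{N}$ is any labeling, then for every $w \in V(H)$ there is an infinite increasing path in $H$ starting at $w$.} To prove it, let $B \subseteq V(H)$ be the set of vertices from which no infinite increasing path starts. For any $v \in B$, every neighbor $u$ of $v$ with $\psi(u) > \psi(v)$ must also lie in $B$ (otherwise prepending $v$ to a path from $u$ would contradict $v \in B$). Since $v$ has infinite degree and only finitely many neighbors have label $\le \psi(v)$, $v$ has infinite degree in $H[B]$. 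Thus $H[B]$ has every vertex of infinite degree, and Theorem~\ref{RM} produces an infinite increasing path in $H[B]$, contradicting the definition of $B$. Hence $B = \emptyset$.

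For $(1) \Rightarrow (2)$, given any labeling $\phi : V \to \mathbb{Z}$, I would set $V_1 = \phi^{-1}(\mathbb{Z}_{>0})$ and $V_2 = \phi^{-1}(\mathbb{Z}_{\le 0})$, both infinite since $\phi$ is a bijection, apply $(1)$ to obtain $W_i \subseteq V_i$ with every vertex of infinite induced degree and an edge $w_1 w_2$, $w_i \in W_i$. Applying the strengthened statement above to $G[W_1]$ with labeling $\phi|_{W_1}$ and starting vertex $w_1$ yields an infinite $\phi$-increasing rightward path $w_1 = v_1, v_2, \ldots$ inside $W_1$; applying it to $G[W_2]$ with the order-reversal of $\phi|_{W_2}$ and starting vertex $w_2$ yields an infinite $\phi$-decreasing leftward path $w_2 = u_0, u_{-1}, \ldots$ inside $W_2$. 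Since $W_1 \cap W_2 = \emptyset$, concatenating these along the edge $w_1 w_2$ gives the desired two-sided infinite increasing path.

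For $(2) \Rightarrow (1)$ I plan to argue the contrapositive. Fix a partition $V = V_1 \cup V_2$ into infinite sets witnessing the failure of $(1)$, and for each $i$ let $K_i \subseteq V_i$ be the maximum subset in which every vertex has infinite degree in $G[K_i]$ (well-defined because the union of two such subsets has the same property). Any $W_i \subseteq V_i$ with all infinite induced degrees lies in $K_i$, so the failure of $(1)$ splits into two cases: (a) some $K_i$ is empty, or (b) both $K_1, K_2$ are nonempty and no edge joins them. In case (a), WLOG $K_1 = \emptyset$; Theorem~\ref{RM} gives a labeling $\phi_1 : V_1 \to \mathbb{N}$ admitting no infinite increasing path in $G[V_1]$, and labeling $V_2$ bijectively onto $\mathbb{Z}_{\le 0}$ arbitrarily produces $\phi : V \to \mathbb{Z}$. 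The rightward tail of any two-sided $\phi$-increasing path would lie eventually in $V_1$, contradicting the choice of $\phi_1$.

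The main obstacle is case (b). Since $G[K_1]$ already has all infinite degrees, I cannot hope to eliminate infinite increasing paths in $G[V_1]$; instead the labeling must force every such path to \emph{start} in $K_1$. The key observation is that each $a \in A := V_1 \setminus K_1$ has only finitely many neighbors in $K_1$, for otherwise $K_1 \cup \{a\}$ would contradict the maximality of $K_1$. Moreover $G[A]$ has empty kernel, so Theorem~\ref{RM} supplies a labeling $\psi_A : A \to \mathbb{N}$ with no infinite increasing path in $G[A]$. Enumerate $A$ in $\psi_A$-order as $a_1, a_2, \ldots$ and $K_1$ arbitrarily as $k_1, k_2, \ldots$. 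I would construct $\phi_1 : V_1 \to \mathbb{N}$ in stages: at stage $i$, assign the next unused positive integers to the (finitely many) still-unlabeled $K_1$-neighbors of $a_i$, then label $a_i$, then label $k_i$ if still unlabeled. By construction, $\phi_1$ preserves the $\psi_A$-order on $A$ and gives each $a \in A$ a label strictly greater than those of all its $K_1$-neighbors. Consequently no infinite increasing path in $G[V_1]$ can traverse an edge from $A$ to $K_1$, so upon entering $A$ it must remain in $A$; being entirely in $A$ contradicts the choice of $\psi_A$, hence every infinite increasing path in $G[V_1]$ starts in $K_1$. A symmetric construction on $V_2$ followed by sign reversal gives $\phi|_{V_2} : V_2 \to \mathbb{Z}_{\le 0}$ under which every infinite $\phi$-decreasing path in $G[V_2]$ ends in $K_2$. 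The combined $\phi : V \to \mathbb{Z}$ admits no two-sided infinite increasing path: the unique transition $v_t \in V_2, v_{t+1} \in V_1$ of such a path would give $v_{t+1} \in K_1$, $v_t \in K_2$ with edge $v_t v_{t+1}$—precisely what (b) forbids.
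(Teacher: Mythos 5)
Your proposal is correct and follows essentially the same route as the paper's proof: the forward direction grows the two rays greedily inside $W_1$ and $W_2$ (your lemma via Theorem~\ref{RM} is a slight detour, since a direct greedy step already works), and the reverse direction uses the same dichotomy (empty kernel on one side versus nonempty kernels with no crossing edge), maximal kernels $K_i$, Theorem~\ref{RM}-labelings of $V_i\setminus K_i$ placed above their kernel neighbors, a sign reversal on $V_2$, and the transition-vertex contradiction. Apart from routine bookkeeping (e.g.\ the degenerate case where $A$ is finite or empty), there are no gaps.
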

\begin{proof}

One implication is clear.
Suppose {(1)} and let $\phi$ be any labeling of $V$ by $\mathbb{Z}$.
We let
$$V_1 = \{v\in V: \phi(v) \geq 0\},$$
$$V_2 = \{v\in V: \phi(v) < 0\}.$$
Let $W_1$ and $W_2$ be the subsets of $V_1$ and $V_2$ respectively that are ensured by {(1)}, and let $w_1\in W_1$ and $w_2 \in W_2$ be the adjacent vertices.
Let $v_1 = w_1$, and for $i\geq 1$, let $v_{i+1}$ be a neighbor of $v_{i}$ in $W_1$ so that $\phi(v_{i+1}) > \phi(v_{i})$.
Since each $v_{i}$ has infinitely many such neighbors, $\{v_i\}_{i=1}^\infty$ forms an infinite increasing path in $W_1$.
Similarly let $v_0 = w_2$ and for $i\leq 0$, let $v_{i-1}$ be a neighbor of $v_{i}$ in $W_2$ so that $\phi(v_{i-1}) < \phi(v_{i})$.
Since $v_1$ and $v_0$ are connected by an edge in $G$, the path formed by $\{v_i: -\infty < i < \infty\}$ is an infinite increasing two-sided path in $G$.

Now suppose $(1)$ does not hold. Then there are two possible cases.
\\ {\bf Case I} - there exists a partition of $V$ into infinite sets $V_1, V_2$ such that for $i=1$ or $i=2$ no subset $W_i \subseteq V_i$ induces a subgraph with all infinite degrees.
Then by Proposition~\ref{thm:graph_v_z_1}, there is a labeling of $V$ by $\mathbb{Z}$ with no infinite increasing path.  Such a labeling clearly forbids an infinite increasing two-sided path.

{\bf Case II} - there exists a partition of $V$ into infinite sets $V_1, V_2$ so that for any subsets $W_1\subseteq V_1$, $W_2 \subseteq V_2$  with all vertices of $W_i$ have infinite degree in $W_i$, there is no edge between vertices of $W_1$ and $W_2$.
For $i=1,2$, consider the family $\mathcal{W}_i$ of such sets $W_i$, where $\mathcal{W}_i$ is partially ordered by inclusion, and observe that each $\mathcal{W}_i$ contains a maximal element. Fix vertex-maximal subsets $W_1\in \mathcal{W}_i$ and $W_2\in \mathcal{W}_i$.
With $U_i = V_i\setminus W_i$ for $i=1,2$, the vertex-maximality of $W_i$ implies
\begin{enumerate}
\item[(i)] every nonempty subset $U \subseteq U_i$ contains a vertex $v\in U$ with finite degree in $G[U]$,
\item[(ii)] every $v\in U_i$ has finite neighborhood in $W_i$.
\end{enumerate}
By (i), Theorem~\ref{RM} implies that there is a labeling $\phi_U$ of $U_1$ by $\mathbb{N}$ that forbids infinite increasing paths in $U_1$.
By (ii) we can form a labeling $\phi$ of $V_1$ by positive integers that preserves $\phi_U$ (i.e. $\phi(u_1) < \phi(u_2)$ iff $\phi_U(u_1) < \phi_U(u_2)$ for all $u_1, u_2 \in U_1$) and where $\phi(u) > \phi(w)$ for all $u \in U_1$, $w \in W_1$  with $u$ adjacent to $w$.
Since $\phi$ preserves $\phi_U$, $U_1$ contains no infinite increasing path with respect to $\phi$.

Now follow a similar procedure for $V_2$ and $W_2$, with $U_2 = V_2 \setminus W_2$.
Here we label $V_2$ by \emph{negative} integers to prevent any infinite \emph{decreasing} path.
Construct a labeling $\psi$ of $V_2$ by negative integers so that for any $u\in U_2$, $w\in W_2$ with $u$ adjacent to $w$, $\psi(u) < \psi(w)$, and so that $U_2$ contains no infinite decreasing path with respect to $\psi$.

Finally, for all $v\in V(G)$, let
\[ \gamma(v) = 
	\begin{cases} 
      \phi(v) & v\in V_1 \\
      \psi(v)+1 & v\in V_2,
   \end{cases}
\]
where we add 1 to $\psi(v)$ so that 0 is used as a label.
Note that $\gamma$ is a labeling of $V(G)$ by $\mathbb{Z}$ and $\gamma(V_1)=\mathbb{N}, \gamma(V_2)=\mathbb{Z}\setminus \mathbb{N}$.
Observe the following with respect to $\gamma$:
\[ (3)
	\begin{cases} 
      \text{there are no infinite increasing paths in } U_1, \\
      \text{there are no infinite decreasing paths in } U_2, \\
      \text{there are no edges from } W_1 \text{ to } W_2. \\
   \end{cases}
\]
We claim that there is no infinite two-sided path in $G$ with respect to $\gamma$.
Suppose $\{v_i\}_{-\infty}^\infty$ is such a path, and without loss of generality assume $v_0\in V_2$, $v_1\in V_1$.
If $v_1\in U_1$, then all vertices $v_2, v_3, \ldots$ must be in $U_1$, and so $v_1, v_2, \ldots$ is an infinite increasing path in $U_1$, contradicting (3).
Similarly if $v_0\in U_2$, then $v_{-1}, v_{-2}, \ldots \in U_2$, so there is an infinite decreasing path in $U_2$, contradicting (3).
Consequently, $v_1\in W_1, v_0 \in W_2$, which contradicts (3).
Therefore $G$ contains no infinite increasing two-sided path.

\end{proof}

Perhaps interestingly, when labeling edges by $\mathbb{Z}$, the result is entirely different from the vertex-labeling case, as the next theorem shows.

\begin{proposition}\label{edgeZ1sided}
For every countable graph $G=(V,E)$ there exists a labeling $\phi$ of $E$ by $\mathbb{Z}$, such that there in no infinite increasing path in $G$.
\end{proposition}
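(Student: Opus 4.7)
The strategy is to partition $E$ into two sets $A$ and $E\setminus A$, assign $A$ all the non-negative integer labels and $E\setminus A$ all the negative integer labels. The point of the negative side is that any strictly increasing sequence of integers bounded above by $-1$ is finite, so any infinite $\phi$-increasing path contains only finitely many edges of $E\setminus A$ and its tail is forced into $A$. The point of the positive side is to choose $A$ so that Theorem~\ref{RM} supplies an $\mathbb{N}$-labeling $\phi_A\colon A\to\mathbb{N}$ under which the subgraph $(V,A)$ has no infinite increasing path; that will contradict the existence of such a tail.

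To construct $A$, I would distinguish two cases. If every vertex of $G$ has finite degree then $G$ itself contains no subgraph in which every vertex has infinite degree, so any $A\subseteq E$ with $|A|=|E\setminus A|=\aleph_0$ works. Otherwise pick a vertex $v$ of infinite degree and take $A$ to be the set of all edges of $G$ incident to $v$. This $A$ is a star, so every non-$v$ vertex in any subgraph of $A$ has degree at most $1$, and in particular $A$ contains no subgraph with all infinite degrees. If $E\setminus A$ happens to be finite, then any path in $G$ uses at most two edges incident to $v$ and at most $|E\setminus A|$ other edges, so $G$ has no infinite path and any bijection $E\to\mathbb{Z}$ trivially satisfies the proposition; hence we may assume $|E\setminus A|=\aleph_0$.

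With $A$ in hand, let $\phi_A\colon A\to\{1,2,\ldots\}$ be an $\mathbb{N}$-labeling of $(V,A)$ supplied by Theorem~\ref{RM} admitting no infinite $\phi_A$-increasing path, and let $\psi\colon E\setminus A\to\{-1,-2,\ldots\}$ be any bijection. Define
\[
\phi(e)=\begin{cases}\phi_A(e)-1 & \text{if } e\in A,\\ \psi(e) & \text{if } e\in E\setminus A.\end{cases}
\]
Then $\phi$ is a bijection $E\to\mathbb{Z}$, and its restriction to $A$ induces the same order as $\phi_A$. If $e_1,e_2,\ldots$ were an infinite $\phi$-increasing path then the labels $\phi(e_i)$ form a strictly increasing integer sequence, hence $\phi(e_i)\to\infty$; since all labels on $E\setminus A$ are at most $-1$, some tail $e_N,e_{N+1},\ldots$ lies entirely in $A$ and forms an infinite $\phi_A$-increasing path in $A$, contradicting the choice of $\phi_A$.

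The only real obstacle is producing $A$ with the required property while keeping $E\setminus A$ infinite; once one notices that the star at a vertex of infinite degree has no subgraph with all infinite degrees (every leaf already has degree exactly $1$), the remaining steps are bookkeeping.
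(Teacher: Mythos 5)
Your proof is correct, and it follows the same overall template as the paper's --- split $E$ into a ``positive'' and a ``negative'' class, use the fact that a strictly increasing integer sequence escapes the negative labels to force the tail of any putative infinite increasing path into the positive class, and choose the positive class so that it admits a labeling with no infinite increasing path --- but the key construction is genuinely different. The paper cases on whether $G$ contains an infinite matching: if it does, the positive labels are placed exactly on the matching, and the argument is purely local (no two positively labeled edges are incident, so an increasing path cannot continue past a positive edge); if it does not, a vertex-maximal matching is finite and meets every edge, so $G$ has no infinite path at all and any labeling works. You instead case on whether $G$ has a vertex of infinite degree, take the positive class to be a star at such a vertex (or an arbitrary infinite--co-infinite set of edges when $G$ is locally finite), and import the Reiterman direction of Theorem~\ref{RM} to produce the path-free $\mathbb{N}$-labeling of the positive class. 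What the paper's choice buys is self-containedness: it needs no part of Theorem~\ref{RM}. What yours buys is a near-trivial choice of the positive class at the cost of citing a nontrivial theorem --- and in fact you do not need that citation: any path in a star uses at most two of its edges, and in the locally finite case the explicit block construction as in Claim~\ref{claim3} suffices, so your argument could be made self-contained with one line. Both proofs (yours and the paper's) silently assume $E$ is infinite, as a bijection $E\to\mathbb{Z}$ otherwise does not exist; this degenerate case is harmless in either treatment.
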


\begin{proof}
Suppose $G$ contains a matching $M$ with infinitely many edges.
Define a labeling $\phi$ of $E$ by $\mathbb{Z}$ so that $\phi(e)>0$ if and only if $e\in M$.
Then there is no infinite increasing path in $G$, since no two edges with positive labels are incident.

Now suppose $G$ contains no infinite matching.
Consider a vertex-maximal matching with vertex set $V'$.
Then every edge of $E$ is incident to a vertex of $V'$.
Therefore the longest path in $G$ can have at most $2\vert V' \vert$ edges, so no infinite path exists, regardless of how the edges are labeled.
\end{proof}

Proposition~\ref{edgeZ1sided} immediately implies

\begin{corollary}\label{cor2}
For every countable graph $G=(V,E)$ there exists a labeling $\phi$ of $E$ by $\mathbb{Z}$, such that there is no infinite two-sided path in $G$.
\end{corollary}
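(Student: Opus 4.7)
The plan is to observe that Corollary~\ref{cor2} is essentially free from Proposition~\ref{edgeZ1sided}: any infinite two-sided path is built from an infinite one-sided path, so any labeling that kills the latter automatically kills the former.

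More concretely, I would take the labeling $\phi$ of $E$ by $\mathbb{Z}$ guaranteed by Proposition~\ref{edgeZ1sided}, for which there is no infinite (one-sided) increasing path in $G$. Suppose for contradiction that under this $\phi$ there is an infinite two-sided increasing path, i.e.\ a bi-infinite sequence of edges $\{e_i\}_{i=-\infty}^{\infty}$ forming a path in $G$ with $\phi(e_i) < \phi(e_{i+1})$ for all $i \in \mathbb{Z}$. Then the tail $\{e_i\}_{i=1}^{\infty}$ is an infinite (one-sided) path in $G$ whose edge labels satisfy $\phi(e_i) < \phi(e_{i+1})$ for all $i \geq 1$, i.e.\ it is an infinite increasing path in the sense used in Proposition~\ref{edgeZ1sided}. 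This directly contradicts the defining property of $\phi$.

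The whole argument is two lines; there is no obstacle to speak of, and no additional construction is needed beyond invoking Proposition~\ref{edgeZ1sided} and restricting a hypothetical two-sided path to its positive tail.
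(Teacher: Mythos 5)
Your proposal is correct and is exactly the argument the paper intends: the paper states that Corollary~\ref{cor2} follows immediately from Proposition~\ref{edgeZ1sided}, and your observation that the positive tail of any two-sided increasing path would be a one-sided infinite increasing path is precisely that immediate implication, just written out.
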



\section{Directed Graphs}\label{sec:directed}

The next four propositions give results for directed graphs.  The proofs of these propositions mimic the proofs of Theorem~\ref{RM}, Proposition~\ref{thm:graph_v_z_1}, Theorem~\ref{thm:graph_v_z_2}, and Proposition~\ref{edgeZ1sided} respectively, and so they are not included in this paper\footnote{ For the sake of completeness, proofs of these results can be found in Appendix A.}. Let $D=(V,E)$ be a directed graph where an edge $(u,v)\in E$ is oriented from $u$ to $v$.

\begin{proposition}\label{divertN}
For a directed graph $D=(V,E)$, the following are equivalent.
\begin{enumerate}
\item[$(1)$] There exists an induced subgraph $D'$ of $D$ of which all vertices have infinite out-degree in $D'$.
\item[$(2)$] For any labeling of $V$ by $\mathbb{N}$, there exists an infinite increasing directed path in $D$.
\item[$(3)$] For any labeling of $E$ by $\mathbb{N}$, there exists an infinite increasing directed path in $D$.
\end{enumerate}
\end{proposition}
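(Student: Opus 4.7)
The plan is to mimic the proof of Theorem~\ref{RM}, replacing ``degree'' by ``out-degree'' and ``subgraph'' by ``induced subgraph'' throughout. I will establish $(1)\Rightarrow(2)$ and $(1)\Rightarrow(3)$ directly, and close the cycle via the two contrapositives $\neg(1)\Rightarrow\neg(2)$ and $\neg(1)\Rightarrow\neg(3)$.

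For $(1)\Rightarrow(2)$, fix an induced subgraph $D'$ in which every vertex has infinite out-degree, and let $\phi:V\to\mathbb{N}$ be any vertex labeling. Starting from any $v_1\in V(D')$, and having constructed an increasing directed path $v_1,\ldots,v_i$ inside $V(D')$, note that $v_i$ has infinitely many out-neighbors in $D'$, only finitely many of which carry $\phi$-label at most $\phi(v_i)$; pick any of the rest as $v_{i+1}$. The implication $(1)\Rightarrow(3)$ is essentially identical: start from any directed edge $e_1$ of $D'$, and inductively extend by an out-edge in $D'$ of the current head of $e_i$ whose edge label exceeds $\phi(e_i)$, possible because the head has infinitely many out-edges in $D'$.

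For the contrapositive $\neg(1)\Rightarrow\neg(2)$, assume no induced subgraph of $D$ has all out-degrees infinite. Define the transfinite removal sequence $V_0=V$, $V_{\alpha+1}=V_\alpha\setminus\{v\in V_\alpha:\deg^+_{D[V_\alpha]}(v)<\infty\}$, and $V_\lambda=\bigcap_{\beta<\lambda}V_\beta$ at limits. By hypothesis some countable ordinal $\gamma$ satisfies $V_\gamma=\emptyset$, so each $v\in V$ receives a rank $\rho(v)$, the least $\alpha$ with $v\notin V_{\alpha+1}$. By construction, every $v$ has only finitely many out-neighbors $u$ with $\rho(u)\geq\rho(v)$, which induces a well-ordering on $V$. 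Following the M\"uller--R\"odl construction of \cite{MR:82}, one then builds a bijection $\phi:V\to\mathbb{N}$ that assigns (roughly) lower labels to higher ranks and handles same-rank vertices using the finite same-rank out-degree, so that any $\phi$-increasing directed path must descend strictly in the well-ordering after finitely many same-rank steps and hence is finite. For $\neg(1)\Rightarrow\neg(3)$, one reuses the rank function and mimics Reiterman's edge-labeling scheme from \cite{Reit:89}: partition $E$ according to whether each edge points ``upward'' or ``downward'' in the well-ordering induced by $\rho$, label each class separately to kill long increasing paths inside it, and then interleave the two labelings so that any $\psi$-increasing path ultimately descends in rank and must terminate.

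The main obstacle is the delicate labeling construction in the two contrapositive steps. Merely labeling vertices in the rank-respecting order is not sufficient, since a locally finite digraph can still admit an infinite directed out-path (for example the ray $v_1\to v_2\to\cdots$), so the bijection $V\to\mathbb{N}$ (resp.\ $E\to\mathbb{N}$) must be staged carefully to exploit the finiteness of same-rank out-neighborhoods and trap every increasing path. These staged constructions are precisely the ones carried out by M\"uller--R\"odl and Reiterman, and the substitution of ``out-degree'' for ``degree'' does not disturb their arguments, since only the out-degree structure is ever used.
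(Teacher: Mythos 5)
Your overall strategy is the same as the paper's (easy greedy directions, then transfinite peeling by finite out-degree for the two contrapositives, followed by M\"uller--R\"odl-- and Reiterman-style labelings), but as written the two contrapositive steps have a genuine gap: the labeling constructions, which are the entire technical content of the statement, are not carried out but are delegated to \cite{MR:82} and \cite{Reit:89} with the assertion that ``the substitution of out-degree for degree does not disturb their arguments.'' Those papers prove statements about \emph{undirected} graphs; the directed versions are not formal corollaries of them, so this appeal must be replaced by an actual construction. Concretely, for $\neg(1)\Rightarrow\neg(2)$ your heuristic ``assign lower labels to higher ranks'' cannot be implemented literally: there may be infinitely many (indeed transfinitely many) rank classes, each infinite, so no bijection to $\mathbb{N}$ can place one whole class below another. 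What is really needed is an assembly device such as the paper's Lemma~\ref{lemmaforvertdi}: if $V$ is partitioned into classes each admitting a path-killing labeling, and each vertex sends only finitely many out-edges into later classes, then the labelings can be interleaved (this is the delicate $Q_n,P_n,k_n$ bookkeeping) into one labeling of $V$ by $\mathbb{N}$ with no infinite increasing directed path; one then needs a transfinite induction on the peeling ordinal $s(D)$, treating successor and limit ordinals separately, to handle ranks beyond $\omega$. None of this is visible in your sketch, and it is exactly the part that does not follow by merely ``reading out-degree for degree'' in the undirected proof.

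Similarly, for $\neg(1)\Rightarrow\neg(3)$ your proposed partition of $E$ by whether an edge points ``upward or downward in the well-ordering induced by $\rho$'' is not Reiterman's dichotomy and is too coarse to run the argument: the paper fixes an auxiliary enumeration $V=\mathbb{N}$ and splits $E$ into the edges on which the enumeration and the well-ordering $\prec$ agree (the set $L$, which has finite out-degrees and is killed by a parity/layering labeling as in Claim~\ref{claim3.5}) versus the rest, labeled through the finite sets $K_i$; the finiteness of predecessors in the natural order is essential and your $\rho$-based split does not provide it. Moreover, the verification that an increasing path eventually has non-increasing $\prec$-minimal endpoints requires a case analysis specific to directed paths, where consecutive edges $(u,v),(v,w)$ meet head-to-tail (the paper's Claim~\ref{claim4}); this is precisely the kind of check your ``the substitution does not disturb their arguments'' waves away. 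So the route you chose is the right one and matches the paper, but the proposal as it stands is an outline: the interleaving lemma, the transfinite induction, and the directed edge-labeling case analysis still have to be proved.
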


\begin{proposition}\label{thm:oriented_v_z_1}
For a directed graph $D=(V,E)$, the following are equivalent.
\begin{enumerate}
    \item[$(1)$] For every subset $V_1 \subseteq V$ where both $V_1$ and $V\setminus V_1$ are infinite, there exist $W_1\subseteq V_1$ such that all $v\in W_1$ have infinite out-degrees in $D[W_1]$.
    \item[$(2)$] For any labeling of $V$ by $\mathbb{Z}$ there is an infinite increasing directed path.
\end{enumerate}
\end{proposition}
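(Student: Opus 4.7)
The plan is to mimic the proof of Proposition~\ref{thm:graph_v_z_1} nearly verbatim, with every appeal to Theorem~\ref{RM} replaced by its directed analog Proposition~\ref{divertN}.

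For $(1)\Rightarrow(2)$, let $\phi\colon V\to\mathbb{Z}$ be any labeling, and set
\[
V_1=\{v\in V:\phi(v)>0\}, \qquad V_2=V\setminus V_1.
\]
Both sets are infinite because $\phi$ is a bijection onto $\mathbb{Z}$. Applying $(1)$ to this partition produces $W_1\subseteq V_1$ in which every vertex has infinite out-degree in $D[W_1]$. The restriction $\phi|_{W_1}$ is order-isomorphic to a labeling of $W_1$ by $\mathbb{N}$, so Proposition~\ref{divertN} yields an infinite increasing directed path in $D[W_1]$, which is also one in $D$.

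For $\neg(1)\Rightarrow \neg(2)$, fix a partition $V=V_1\sqcup V_2$ into infinite sets for which no $W_1\subseteq V_1$ induces a subgraph with all infinite out-degrees. Applying Proposition~\ref{divertN} to $D[V_1]$, choose a labeling $\phi_1$ of $V_1$ by $\mathbb{N}$ under which $D[V_1]$ contains no infinite increasing directed path; then extend to a bijection $\phi\colon V\to\mathbb{Z}$ by labeling $V_2$ arbitrarily with $\mathbb{Z}\setminus\mathbb{N}$. Since any strictly increasing sequence of integers is unbounded above while all labels in $V_2$ are at most $0$, any infinite increasing directed path in $D$ can use only finitely many vertices of $V_2$. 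Its infinite tail is then an infinite increasing directed path in $D[V_1]$ under $\phi_1$, contradicting the choice of $\phi_1$.

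The only non-trivial input is Proposition~\ref{divertN}, which we use as a black box; beyond that, the argument is essentially bookkeeping. The one point worth a sanity check, namely that $D[V_2]$ might independently support an infinite increasing directed path under $\phi|_{V_2}$, is ruled out by the observation that labels on $V_2$ are bounded above by $0$ and integer sequences cannot increase indefinitely within a bounded set. Accordingly, I do not anticipate any substantive obstacle.
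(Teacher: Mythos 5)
Your proposal is correct and follows essentially the same route as the paper's own proof in Appendix A: split $V$ by the sign of the label and invoke Proposition~\ref{divertN} for $(1)\Rightarrow(2)$, and for the converse take a partition witnessing $\neg(1)$, label $V_1$ by $\mathbb{N}$ via Proposition~\ref{divertN} to forbid infinite increasing directed paths, and label $V_2$ by the non-positive integers. Your explicit remarks (that the restriction of $\phi$ to $W_1$ is order-isomorphic to an $\mathbb{N}$-labeling, and that an increasing integer sequence eventually leaves $V_2$) only spell out steps the paper leaves implicit.
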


Recall Definition~\ref{def:inf2dipath}.

\begin{proposition}\label{thm:oriented_v_z_2}
For a directed graph $D=(V,E)$, the following are equivalent.
\begin{itemize}
    \item[$(1)$] For any partition of $V$ into infinite sets $V_1, V_2$ there exists $W_1 \subseteq V_1$, $W_2 \subseteq V_2$ so that for all $v\in W_1$, $v$ has infinite out-degree in $D[W_1]$, and for all $v\in W_2$, $v$ has infinite in-degree in $D[W_2]$, and there are vertices $w_i \in W_i$ for $i=1,2$, such that $(w_2,w_1) \in E$.
    \item[$(2)$] For any labeling of $V$ by $\mathbb{Z}$ there is an infinite two-sided directed path.
\end{itemize}
\end{proposition}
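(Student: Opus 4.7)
The plan is to imitate the proof of Theorem~\ref{thm:graph_v_z_2} almost verbatim, using Proposition~\ref{divertN} in place of Theorem~\ref{RM} and keeping careful track of three asymmetries introduced by orientations: forward extension in $V_1$ uses out-degrees, backward extension in $V_2$ uses in-degrees, and the linking edge is oriented from $W_2$ to $W_1$. The in-degree analog of Proposition~\ref{divertN} that I will need follows by applying Proposition~\ref{divertN} to the reverse digraph $D^{\mathrm{rev}}$.

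For $(1)\Rightarrow(2)$, given a bijection $\phi\colon V\to \mathbb{Z}$, partition $V$ into $V_1=\{v:\phi(v)\ge 0\}$ and $V_2=\{v:\phi(v)<0\}$, both infinite. Condition (1) yields $W_1\subseteq V_1$, $W_2\subseteq V_2$ with the stated infinite-degree properties and an edge $(w_2,w_1)\in E$. Greedily extend forward from $w_1$ by choosing out-neighbors in $W_1$ of strictly larger label at each step (only finitely many non-negative integers lie below any given one, so infinitely many suitable out-neighbors remain available), and symmetrically extend backward from $w_2$ by choosing in-neighbors in $W_2$ of strictly smaller label. The edge $(w_2,w_1)$ glues the two half-paths into the desired infinite increasing two-sided directed path.

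For $(2)\Rightarrow(1)$ I argue contrapositively, splitting into two cases. \textbf{Case I}: for some $i\in\{1,2\}$ no nonempty $W_i\subseteq V_i$ has the required infinite-degree property. Proposition~\ref{divertN} (applied to $D[V_1]$ or to the reverse of $D[V_2]$) gives a labeling of $V_i$ by $\mathbb{N}$ or $-\mathbb{N}$ forbidding the relevant infinite directed increasing path; labeling the other side arbitrarily yields a $\mathbb{Z}$-labeling under which any hypothetical two-sided path would eventually lie inside $V_1$ going forward or inside $V_2$ going backward, a contradiction. \textbf{Case II}: Zorn's lemma produces maximal elements $W_1,W_2$ of the respective families of admissible subsets (which are closed under chain unions), and by assumption no edge $(w_2,w_1)\in E$ joins them. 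Writing $U_i=V_i\setminus W_i$, maximality supplies (i) every nonempty $U\subseteq U_1$ contains a vertex of finite out-degree in $D[U]$, (ii) every nonempty $U\subseteq U_2$ contains a vertex of finite in-degree in $D[U]$, (iii) each $u\in U_1$ has finite out-neighborhood in $W_1$, and (iv) each $u\in U_2$ has finite in-neighborhood in $W_2$. By (i), (ii) and Proposition~\ref{divertN}, I obtain $\phi_U\colon U_1\to\mathbb{N}$ forbidding infinite increasing directed paths in $D[U_1]$, and $\psi_U\colon U_2\to -\mathbb{N}$ forbidding infinite sequences $v_0,v_{-1},v_{-2},\ldots$ in $D[U_2]$ with $(v_{-i-1},v_{-i})\in E$ and $\psi_U(v_{-i-1})<\psi_U(v_{-i})$. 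Using (iii) and (iv), I interleave to extend $\phi_U$ and $\psi_U$ to bijections $\phi\colon V_1\to\mathbb{N}$ and $\psi\colon V_2\to -\mathbb{N}$ arranging $\phi(w)<\phi(u)$ whenever $(u,w)\in E$ with $u\in U_1$ and $w\in W_1$, and $\psi(w)>\psi(u)$ whenever $(w,u)\in E$ with $w\in W_2$ and $u\in U_2$. Combining into $\gamma\colon V\to\mathbb{Z}$, any purported two-sided path $\{v_i\}$ normalized so that $v_0\in V_2$ and $v_1\in V_1$ dies by a four-way case analysis on $v_1\in U_1$ versus $W_1$ and $v_0\in U_2$ versus $W_2$: if $v_1\in U_1$ the forward half must remain in $U_1$ (contradicting $\phi_U$), if $v_0\in U_2$ the backward half must remain in $U_2$ (contradicting $\psi_U$), and the corner $v_1\in W_1$, $v_0\in W_2$ supplies a forbidden $(w_2,w_1)$ edge. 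The principal obstacle is bookkeeping the orientation conventions so that the constraints imposed on $U_2$ concern in-neighbors in $W_2$ rather than out-neighbors, and the transition condition across $U$--$W$ boundaries is set up in the correct direction on each side.
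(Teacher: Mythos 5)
Your proposal is correct and follows essentially the same route as the paper's own argument: the greedy two-sided extension for $(1)\Rightarrow(2)$, and for $(2)\Rightarrow(1)$ the same two-case contrapositive with maximal sets $W_1,W_2$, the conditions (i)--(iv), the interleaved labelings $\phi,\psi$ combined into $\gamma$, and the concluding case analysis. The only cosmetic difference is that in Case I you invoke Proposition~\ref{divertN} (via the reverse digraph) directly, whereas the paper routes through Proposition~\ref{thm:oriented_v_z_1}, which amounts to the same thing.
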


\begin{proposition}\label{oriented_edgeZ1sided}
For every countable directed graph $D=(V,E)$ there exists a labeling $\phi$ of $E$ by $\mathbb{Z}$, such that there is no infinite increasing directed path in $G$.
\end{proposition}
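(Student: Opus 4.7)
The plan is to adapt the proof of Proposition~\ref{edgeZ1sided} essentially verbatim, since the matching-based argument there is insensitive to edge orientation. First I would pass to the underlying undirected graph $G$ of $D$, obtained by forgetting orientations (and identifying any pair of antiparallel directed edges), and then split into two cases according to whether $G$ contains an infinite matching.

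In the case that $G$ contains an infinite matching, I would lift it to an infinite set $M\subseteq E$ of directed edges whose underlying vertex pairs are pairwise disjoint (for each matching edge of $G$, pick one directed representative in $D$). Define $\phi:E\to\mathbb{Z}$ by any bijection from $M$ onto the positive integers together with any bijection from $E\setminus M$ onto the nonpositive integers. Now suppose for contradiction that $e_1,e_2,\ldots$ is an infinite increasing directed path under $\phi$. Since $\phi(e_1)<\phi(e_2)<\cdots$ is a strictly increasing sequence of integers, it tends to $+\infty$, so $\phi(e_i)>0$ eventually, forcing $e_i\in M$ for all sufficiently large $i$. But consecutive edges of a directed path share a vertex (the head of $e_i$ is the tail of $e_{i+1}$), whereas distinct edges of $M$ are vertex-disjoint by construction -- contradiction.

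In the case that $G$ has no infinite matching, I would fix a vertex-maximal matching in $G$; it is necessarily finite, and its vertex set $V'\subseteq V$ is a finite vertex cover of $G$, hence of $D$. Every directed edge of $D$ is then incident to $V'$, so any directed path in $D$ visits at most $2|V'|$ vertices (between consecutive visits to $V'$ the path can take at most one step outside). In particular $D$ has no infinite directed path at all, and any labeling $\phi:E\to\mathbb{Z}$ trivially works.

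The only real obstacle is bookkeeping: one must verify that the key step -- ``consecutive edges of a path share a vertex'' -- holds verbatim for directed paths, which it does by definition. Beyond that the proof is a direct translation of Proposition~\ref{edgeZ1sided}, and no new idea is required.
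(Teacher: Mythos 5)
Your proof is correct and takes essentially the paper's approach: the paper simply notes that the directed statement follows at once from the undirected edge-labeling result (Proposition~\ref{edgeZ1sided}, via Corollary~\ref{cor2}) applied to the underlying graph, while you re-run the same matching dichotomy directly on $D$ -- the key point in both cases being that an increasing directed path is in particular an increasing path in the underlying graph. The only quibble, shared with the paper's own proof of Proposition~\ref{edgeZ1sided}, is the degenerate case where $E\setminus M$ is finite (so your two bijections cannot exhaust $\mathbb{Z}$), which is fixed by replacing $M$ with an infinite co-infinite submatching.
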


\section{Open Problems}\label{sec:problems}

\textbf{Question.} \textit{In the statement of Theorem~\ref{thm:hyperedge}, is it true that $(2) \Leftrightarrow (3)$?}

To answer this question in the affirmative, it remains to show $(3) \Rightarrow (2)$. In our proof that $(1) \Rightarrow (2)$ in Theorem~\ref{thm:hyperedge}, we give a labeling of $H$ in which any increasing $(k-1)$-branching tree must have some branch of finite length, and this is sufficient to prevent an infinite increasing $(k-1)$-branching tree. To answer the above question by using our approach, one would need to give a labeling of $H$ in which \emph{every} branch is finite.

\textsc{Property EFIN.}
While Theorem~\ref{thm:FIN} characterizes graphs that contain arbitrarily long increasing paths under any vertex labeling, it may be of some interest to characterize graphs that have the same property with respect to \emph{edge} labeling (which we call property EFIN). Indeed, every other problem in this paper is addressed for both vertex and edge labeling. Formally, we say that a countable graph $G$ has a property EFIN if for any labeling of edges of $G$ by $\mathbb{N}$ and any $k\in \mathbb{N}$ there exists an increasing path of length $k$ in $G$ and there exists a labeling of edges with no infinite increasing path.

We consider several diverse properties, each of which we claim is related to property EFIN. Let $G$ be a countable graph.
\begin{itemize}
\item[(i)] It was proved in~\cite{Rod} that any edge-labeling of a finite graph $H$ contains an increasing path of length $\sqrt{d}(1+o(1))$, where $d$ is the average degree of $H$.  (See also~\cite{GK:73}, where a bound of $\sqrt{n}$ was proved in the case that $H=K_n$.
This lower bound of $\sqrt{n}$ was subsequently improved in~\cite{Mila:17} and~\cite{BKP}. However, the problem of improving the $\sqrt{d}$ bound for general finite graphs $H$ with average degree $d< \log n$ is still largely open.  For several interesting results related to general graphs, see~\cite{BKP}.)
\item[(ii)] For any $k$ let $T_k$ denote a tree that has height $k$ with  every non-leaf vertex having infinite degree and all leaves having height $k$. We say that $T_k$ is an infinite-branching tree of height $k$. For example, an infinite star is $T_2$.
\item[(iii)] For a graph $G$ we define $H$ -- the infinite 2-distance graph of $G$. The vertex set of $H$ is the same as the vertex set of $G$, and two vertices $u,v$ are adjacent in $H$ if and only if there are infinitely many vertices $w$, such that $uw,vw \in E(G)$.  
\end{itemize}

It is not hard to show that $G$ has property EFIN if $G$ does not contain a subgraph in which all vertices have infinite degree and $G$ satisfies at least one of the following:
\begin{itemize}
    \item[(1)] For any $d\geq 1$, $G$ contains a finite subgraph with minimal degree $d$.  
    \item[(2)] For any $k\geq 1$, $G$ contains an infinite-branching tree of height $k$.  
    \item[(3)] The infinite 2-distance graph of $G$ has infinite chromatic number.
\end{itemize} 

However we cannot prove the converse statement and pose it as open question.

{\bf Question}. \emph{What are equivalent conditions to EFIN?}

\begin{bibdiv}
\begin{biblist}
 
\bib{BKP}{article}{
   author={Buci\'{c}, M.},
   author={Kwan, M.},
   author={Pokrovskiy, A.},
   author={Sudakov, B.},
   author={Tran, T.},
   author={Wagner, A.},
   title={Nearly-linear monotone paths in edge-ordered graphs},
   journal={submitted},
}

\bib{Ell:20}{article}{
   author={Elliott, Bradley},
   journal={Doctoral Thesis, Emory University},
   date={2020},
}

\bib{Gal:68}{article}{
   author={Gallai, T.},
   title={On directed paths and circuits},
   conference={
      title={Theory of Graphs},
      address={Proc. Colloq., Tihany},
      date={1966},
   },
   book={
      publisher={Academic Press, New York},
   },
   date={1968},
   pages={115--118},
}

\bib{GK:73}{article}{
   author={Graham, R. L.},
   author={Kleitman, D. J.},
   title={Increasing paths in edge ordered graphs},
   note={Collection of articles dedicated to the memory of Alfr\'{e}d R\'{e}nyi,
   II},
   journal={Period. Math. Hungar.},
   volume={3},
   date={1973},
   pages={141--148},
}

\bib{Has:65}{article}{
   author={Hasse, Maria},
   title={Zur algebraischen Begr\"{u}ndung der Graphentheorie. II},
   language={German},
   journal={Math. nachr.},
   volume={30},
   date={1965},
   pages={155--164},
}

\bib{Mila:17}{article}{
   author={Milans, Kevin G.},
   title={Monotone paths in dense edge-ordered graphs},
   journal={J. Comb.},
   volume={8},
   date={2017},
   number={3},
   pages={423--437},
}

\bib{MR:82}{article}{
   author={M\"{u}ller, Vladim\'{i}r},
   author={R\"{o}dl, Vojt\v{e}ch},
   title={Monotone paths in ordered graphs},
   journal={Combinatorica},
   volume={2},
   date={1982},
   number={2},
   pages={193--201},
}

\bib{Reit:89}{article}{
   author={Reiterman, Jan},
   title={A note on monotone paths in labeled graphs},
   journal={Combinatorica},
   volume={9},
   date={1989},
   number={2},
   pages={231--232},
}

\bib{Rod}{article}{
   author={R\"odl, Vojt\v ech},
   journal={Master's Thesis, Charles University},
   date={1973},
}

\bib{Roy:67}{article}{
   author={Roy, B.},
   title={Nombre chromatique et plus longs chemins d'un graphe},
   language={French},
   journal={Rev. Fran\c{c}aise Informat. Recherche Op\'{e}rationnelle},
   volume={1},
   date={1967},
   number={5},
   pages={129--132},
}

\bib{Vit:62}{article}{
   author={Vitaver, L. M.},
   title={Determination of minimal coloring of vertices of a graph by means
   of Boolean powers of the incidence matrix},
   language={Russian},
   journal={Dokl. Akad. Nauk SSSR},
   volume={147},
   date={1962},
   pages={758--759},
}

\end{biblist}
\end{bibdiv}

\appendix
\section{}\label{appendixB}
The four propositions in Section~\ref{sec:directed} give results for directed graphs.  The proofs of these propositions mimic the proofs of Theorem~\ref{RM}, Proposition~\ref{thm:graph_v_z_1}, Theorem~\ref{thm:graph_v_z_2}, and Proposition~\ref{edgeZ1sided} respectively, and so they are not indented to be included in the final version of this paper. They can be found in the doctoral thesis of Bradley Elliott \cite{Ell:20}.

Let $D=(V,E)$ be a directed graph where an edge $(u,v)\in E$ is oriented from $u$ to $v$. We say $D$ is in $\mathcal{V}^D_\mathbb{N}$ if for any labeling of $V$ by $\mathbb{N}$, there exists an infinite increasing directed path in $D$.

The proof of Theorem~\ref{divertN} mimics the proof of Theorem~\ref{RM}, and so we start by proving the following lemma.

\begin{lemma}\label{lemmaforvertdi}
Let $D=(V,E)$ be a directed graph and let the sets $V_i$, $i\in \mathbb{N}$, form a partition of $V$ such that the graphs $D_i = D[V_i]$ do not belong to $\mathcal{V}_{\mathbb{N}}^D$.
If for all $i\geq 1$ and each $x \in V_i$ the set
$$ \Big\{ (x,y)\in E : y\in \bigcup_{j=i+1}^\infty V_j  \Big\}$$
is finite, then $H$ does not belong to $\mathcal{V}_{\mathbb{N}}^D$.
\end{lemma}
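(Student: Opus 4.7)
The plan is to construct, from the labelings $\phi_i$ guaranteed by the hypothesis, a single labeling $\phi:V\to\mathbb{N}$ for which $D$ has no infinite increasing directed path, thereby witnessing $D\notin\mathcal{V}_{\mathbb{N}}^D$. Closely mirroring the undirected M\"{u}ller--R\"{o}dl argument, I would first fix for each $i$ a labeling $\phi_i:V_i\to\mathbb{N}$ so that $D_i$ has no infinite increasing directed path under $\phi_i$, and enumerate $V_i=\{v_i^1,v_i^2,\ldots\}$ with $\phi_i(v_i^k)=k$. For $x\in V_i$, the up-out-neighborhood $N_\uparrow^+(x):=\{y:(x,y)\in E,\ y\in\bigcup_{j>i}V_j\}$ is finite by hypothesis; this finiteness is the ingredient that lets us splice the $\phi_i$'s into $\phi$.

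The aim is a labeling $\phi$ satisfying two properties: (A) the restriction of $\phi$ to each $V_i$ preserves the ordering induced by $\phi_i$; and (B) for every up-edge $(x,y)$ with $x\in V_i$ and $y\in N_\uparrow^+(x)$, $\phi(y)<\phi(x)$. Granting (A) and (B), one concludes as follows. Suppose $w_1\to w_2\to\cdots$ is an infinite increasing directed path under $\phi$. By (B), no edge of this path can be an up-edge, so if $w_n\in V_{i_n}$, then the sequence $i_1\geq i_2\geq\cdots$ is a non-increasing sequence in $\mathbb{N}$, hence eventually constant at some $i^*$. The corresponding tail of the path lies in $V_{i^*}$, is increasing in $\phi|_{V_{i^*}}$, and therefore (by (A)) in $\phi_{i^*}$, contradicting the defining property of $\phi_{i^*}$.

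The remaining task is the construction of $\phi$. Because each $N_\uparrow^+(x)$ is finite and each $V_i$ is enumerated in type $\omega$ by $\phi_i$, every vertex has only finitely many direct predecessors under the partial order imposed by (A) and (B). I would enumerate $V$ as $u_1,u_2,\ldots$ in a priority-queue manner, assigning the next positive integer to a vertex whose predecessors in this partial order have all already been labeled, and then set $\phi(u_n)=n$. The main obstacle is that the transitive closure of the dependency relation need not be finite at every vertex, so a pure topological sort does not always linearize to type $\omega$. I expect to handle this via a diagonal scheme that, at each step, labels a finite prefix of each class's $\phi_i$-enumeration and simultaneously absorbs finite layers of up-closures; verifying that such a scheme exhausts $V$ in $\omega$ steps while maintaining (A) and (B) is the delicate point of the argument.
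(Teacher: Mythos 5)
Your overall strategy (splice the per-class labelings into one labeling of $V$, preserving each within-class order and forcing ``up-edges'' to be label-decreasing) is the same general idea as the paper's proof, but the precise intermediate goal you set, namely (A) together with (B) for \emph{every} up-edge, is unachievable in general, and the ``delicate point'' you defer is exactly where the argument breaks. If there is an infinite sequence $x_1,x_2,x_3,\ldots$ in which each $x_{n+1}$ is an up-out-neighbor of $x_n$ (each $x_n\in V_{i_n}$ with $i_1<i_2<\cdots$), then (B) alone forces $\phi(x_1)>\phi(x_2)>\phi(x_3)>\cdots$, an infinite strictly decreasing sequence in $\mathbb{N}$, which is impossible; you do not even need (A) to reach this contradiction. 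A concrete instance satisfying the hypotheses of the lemma: let $V_i=\{v_i^k:k\in\mathbb{N}\}$ with no edges inside any $V_i$ (so each $D_i\notin\mathcal{V}_{\mathbb{N}}^D$ trivially) and with the single up-edge $(v_i^k,v_{i+1}^k)$ from each vertex, so every up-out-neighborhood is finite. Here no labeling by $\mathbb{N}$ can satisfy (B), yet the lemma's conclusion is genuinely nontrivial for this $D$ (it contains the infinite directed paths $v_1^k\to v_2^k\to\cdots$), so the difficulty cannot be diagonalized away: no priority/diagonal scheme can maintain (A) and (B) simultaneously.

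The paper's proof avoids this by demanding much less than (B). It builds an increasing chain of \emph{finite} blocks $Q_1\subset Q_2\subset\cdots$ exhausting $V$, each block closed under within-class predecessors (via the operator $U$) and sufficiently closed under up-neighbors up to a level threshold $k_n$ (via iterating $U\circ R\vert_{k_n}$), and orders each new block $Q_{n+1}\setminus Q_n$ so that vertices of \emph{higher} classes receive \emph{smaller} labels. The resulting ordering restricts to $\leq_i$ on each $V_i$ (your (A)), but up-edges are guaranteed label-decreasing only when they cross a threshold $k_n$ (the paper's Lemma~\ref{lemma1}); this weaker property is used first to show that the class indices along any increasing path stay bounded by some $k_j$, and then a separate argument (Note~\ref{note1} plus Claim~\ref{claim2}) shows the class indices are eventually non-increasing, hence eventually constant, at which point the within-class labeling $\leq_i$ gives the contradiction. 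To repair your proof you would have to replace (B) by such a threshold-restricted version and add the ``eventually non-increasing heights'' step, since your current conclusion (``no edge of the path can be an up-edge'') relies on the unattainable full-strength (B).
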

\begin{proof}
Let $V = {v_1, v_2, \ldots}$.
For each $v\in V$, we say $h(v) = i$ if $v\in V_i$.
Let $\leq_i$ be an ordering of $V_i$ so that there is no infinite increasing directed path in $D_i$ with respect to $\leq_i$.
For any set $M\subseteq V$, define
$$R(M) = M \cup \Big\{ y\in V: \exists x\in M \text{ with } (x,y)\in E, h(x) < h(y) \Big \},$$
$$R\vert_{k} (M) = \{ x \in R(M) : h(x) \leq k\}, \text{ and}$$
$$U(M) = \Big \{ y\in V: \exists x\in M \text{ with } h(y)=h(x) \text{ and } y \leq_{h(y)} x \Big \}.$$
We say that $(U \circ R\vert_k)^1 M = U(R\vert_k(M))$ and for all $i\geq 1$, $(U \circ R\vert_k)^{i+1} M = U(R\vert_k((U \circ R\vert_k)^{i} M))$.

We define sets $Q_n, P_n \subset V$ and positive integers $k_n$ inductively.
Let $Q_0 = \emptyset$.
Now assume $Q_n$ is already given.
Let $X_{n+1} = Q_n \cup \{v_{n+1}\}$ and
$$k_{n+1} = \max\big\{ h(x) : x \in \ R(X_{n+1}) \big \}.$$
We define
$$P_{n+1} = \Big\{ y \in (U \circ R\vert_{k_{n+1}})^{k_{n+1}} X_{n+1} \Big\}.$$
Finally, let
$$Q_{n+1} = U(R(P_{n+1})) .$$
Observe that $U(R\vert_{k_{n+1}}(P_{n+1})) = P_{n+1}$.
It is clear that $Q_i$ and $P_i$ are finite for all $i$, the sequence $\{k_i\}$ is non-decreasing, and that $Q_0 \subset Q_1 \subset \cdots$ and $\bigcup_{i=0}^\infty Q_i = V$.
\begin{note}\label{note1}
If $x\in Q_i$ and $h(x)\leq k_i$ for some $i$, then $x\in P_i$.
\end{note}

We can then define an ordering $\prec$ of $V$ in the following way: we say that $x\prec y$ iff\\
\hspace*{3cm} either $x\in Q_i$, $y\not\in Q_i$ for some $i$,\\
\hspace*{3cm} or $x,y \in Q_{i+1} \setminus Q_i$ for some $i$ and $h(x) > h(y)$,\\
\hspace*{3cm} or $x,y \in Q_{i+1} \setminus Q_i$ for some $i$ and $h(x) = h(y)$ and $x\leq_{h(x)} y$.

\begin{claim}\label{claim2}
If $x\in Q_i \setminus Q_{i-1}$ for some $i$, $y\in Q_i$, and $h(x) < h(y)$, then $x\succ y$.
\end{claim}
Indeed, either $y\in Q_{i-1}$ or $y\in Q_i \setminus Q_{i-1}$, both cases of which are clear from the definition of~$\prec$.
The following lemma proves a useful property about the ordering~$\prec$.

\begin{lemma}\label{lemma1}
For an edge $(x,y)\in E$, if $h(x) \leq k_n < h(y)$ for some $n$, then $x\succ y$.
\end{lemma}
\begin{proof}
Let $x\in Q_j\setminus Q_{j-1}$ for some $j$.
Then $y\in P_{j+1}$, meaning $h(y) \leq k_{j+1}$, and $k_n < k_{j+1}$.
Thus $h(x) \leq k_j$, and Note~\ref{note1} implies $x\in P_j$.
So $y\in Q_j$, and Claim~\ref{claim2} implies $x\succ y$.
\end{proof}

Suppose now that $G$ contains an infinite increasing directed path $\{(x_i, x_{i+1})\}_{i=1}^\infty$ with respect to the ordering $\prec$.
Choose $j$ such that $h(x_1) \leq k_j$.
Then by Lemma~\ref{lemma1} we know $h(x_i) \leq k_j$ for all $i$.
Since $Q_j$ is finite, we can fix an index $i_0$ so that $x_i \not\in Q_j$ for all $i\geq i_0$.
We prove that $h(x_{i+1}) \leq h(x_i)$ for all $i\geq i_0$.
Suppose on the contrary that $h(x_{i+1}) > h(x_i)$ for some $i\geq i_0$. Let $x_i\in Q_r \setminus Q_{r-1}$ for some $r>j$, then $h(x_i)\leq k_j<k_r$. By Note~\ref{note1}, $x_i\in P_r$ and so $x_{i+1}\in Q_r$. But then, by Claim~\ref{claim2}, $x_{i+1}\prec x_i$, a contradiction.

Therefore, for some $t$, $h(x_t) = h(x_{t+1}) = h(x_{t+2}) = \cdots$.
Notice that the restriction of the ordering $\prec$ to the set $V_i$ is just the ordering $\leq_i$.
From the properties of $\leq_i$, it follows that every set $V_i$ contains only a finite number of vertices of the sequence $x_1, x_2, x_3, \ldots$.
Thus the increasing directed path cannot be infinite, and $H \not\in \mathcal{V}_{inf}$.
\end{proof}
 \begin{corollary}\label{cor1}
Let $D$ be a countable directed graph with $\deg(v)<\infty$ for all $v\in V(D)$.  Then $D \not\in \mathcal{V}_{\mathbb{N}}^D$.
\end{corollary}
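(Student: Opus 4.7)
The plan is to derive Corollary~\ref{cor1} as a direct application of Lemma~\ref{lemmaforvertdi}, using the most trivial possible partition of $V(D)$ into singletons. This seems to be exactly the setup for which the lemma was crafted, so almost no additional work should be required.

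More concretely, since $V(D)$ is countable, I would fix any enumeration $V(D) = \{v_1, v_2, \ldots\}$ and set $V_i := \{v_i\}$ for every $i \geq 1$. Each induced subgraph $D_i = D[V_i]$ is then a single vertex with no edges, so it vacuously contains no infinite increasing directed path under any labeling, hence $D_i \notin \mathcal{V}_{\mathbb{N}}^D$. To apply Lemma~\ref{lemmaforvertdi}, it remains to verify that for each $i \geq 1$ and each $x \in V_i$, the set
\[
\Bigl\{ (x,y) \in E : y \in \bigcup_{j=i+1}^\infty V_j \Bigr\}
\]
is finite. But this set is a subset of the out-edges incident to $x = v_i$, which by hypothesis is finite (since $\deg(v_i) < \infty$ implies in particular that $v_i$ has finite out-degree). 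The hypotheses of Lemma~\ref{lemmaforvertdi} are therefore met, and the lemma directly yields $D \notin \mathcal{V}_{\mathbb{N}}^D$, which is the conclusion of the corollary.

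There is essentially no obstacle; the only thing to double-check is that ``$\deg(v) < \infty$'' in the statement of the corollary is meant in the sense that includes finite out-degree (which is the only thing used). If ``$\deg$'' referred only to total or in-degree, one would still get finite out-degree as a consequence, so nothing changes. Thus the proof proposal is a single short paragraph deducing the corollary from the lemma via the singleton partition.
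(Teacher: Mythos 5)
Your proof is correct and is essentially identical to the paper's own argument: the paper also applies Lemma~\ref{lemmaforvertdi} to the singleton partition $V_i=\{v_i\}$, with the finiteness hypothesis holding because each vertex has finite (out-)degree. Nothing further is needed.
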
  \begin{proof}
Let $D_i = \{v_i\}$ so that $\vert D_i \vert =1 $ for all $i\geq 1$ and $\bigcup_{i=1}^\infty D_i = V(D)$.
\end{proof}

\begin{proof}[Proof of Theorem~\ref{divertN}]
Clearly $(1) \Rightarrow (2)$ and $(1) \Rightarrow (3)$.

To show $(2) \Rightarrow (1)$, we define the sets $M_\alpha, V_\alpha$ where $\alpha$ is an ordinal number $< \omega_1$.
We let
$$M_1 = \{v\in V : \deg_D^{out}(v) < \infty\},$$

$$V_\alpha = V\setminus \bigcup_{\beta < \alpha} M_\beta$$
$$M_\alpha = \{v\in V_\alpha : \deg_{D[V_\alpha]}^{out}(v) < \infty\}.$$
Set $s(D) = \min\{\alpha: M_\alpha = \emptyset\}$.
First we prove $\bigcup_{\alpha < \omega_1} M_\alpha = V$.
Suppose not, and let
$$V^* = V \setminus \bigcup_{\alpha < \omega_1} M_\alpha,$$
$$D^* = D[V^*],$$
and $\alpha_0 = s(D)$.
For any $v\in V^*$, if $\deg_{D^*}^{out}(v) < \infty$ then $v\in M_{\alpha}$ for some $\alpha < \omega_1$, a contradiction.
So $\deg_{D^*}^{out}(v) = \infty$ for every $v\in V^*$, which contradicts $(1)$.

Now we prove by transfinite induction on $s(D)$ that there exists a labeling of $V(D)$ by $\nn$ not containing an infinite increasing path. Lemma~\ref{lemmaforvertdi} and Corollary~\ref{cor1} prove the statement for $s(D) = \omega_0$ with $V_i = M_i$.

Assume that we proved this statement for all ordinals smaller than $s(D)$. If $s(D)$ is a successor, then $V=V_{s(D)} \bigcup _{\alpha < s(D)} M_\alpha$ is a partition of V into two sets, each of which has a labelling without an infinite increasing path. By Lemma~\ref{lemmaforvertdi}, there is a labeling of  $V(D)$ with the same property.

If $s(D)$ is a limit ordinal, then $s(D)=\lim_{n\to \infty} \alpha_n$, where $\{\alpha_i\}_1^{\infty}$ is an increasing sequence. In this case
$V=\bigcup_{n=1}^{\infty}\left(\bigcup _{\alpha_{n-1}<\alpha \leq \alpha_n} M_\alpha\right)$ is a partition of $V$ into sets $U_n=\bigcup _{\alpha_{n-1}<\alpha \leq \alpha_n} M_\alpha$, each of which has a labeling without an infinite increasing path by induction hypothesis. Again, by Lemma~\ref{lemmaforvertdi}, there is a labeling of $V(D)$ with the same property.

To show $(3) \Rightarrow (1)$, we mimic Reiterman's proof of Theorem~\ref{RM}. By (3) there exists a well-ordering $\prec$ of $V$ such that for all $v\in V$. We may assume $V=\mathbb{N}$ with the usual ordering $\leq$.
For an edge $e\in E$, let $\ell(e)$ be the vertex of $e$ that has the smaller label in the ordering $\leq$.
Conversely, $r(e)$ represents the vertex with larger label.
Let
$$L = \big\{ e\in E: \ell(e) \prec r(e) \big\}, \text{ and}$$
$$L_u = \big\{ (u,v)\in L \big\},$$
so that the two orderings $\leq$ and $\prec$ agree on the edges $L$.
Note that for all $u\in V$, there are finitely many $(u,v)\in E$ with $v \leq u$, and there are finitely many $(u,v)\in E$ with $u \prec v$.
So each set $L_u$ is finite, and the out-degree of $u$ in $L$ is finite.

\begin{claim}\label{claim3.5} 
The subgraph $D'=(V,L)$ is not in $\mathcal{V}^D_\mathbb{N}$, so there is a labeling $\phi$ of $D'$ with no infinite increasing directed path.
\end{claim}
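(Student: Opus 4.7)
The plan is to invoke Lemma~\ref{lemmaforvertdi} (which is the heart of Corollary~\ref{cor1}) directly on $D' = (V, L)$. Recall that we have already verified $L_u = \{(u,v) \in L\}$ is finite for every $u \in V$, so every vertex of $D'$ has finite out-degree in $L$.

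I will enumerate $V = \{v_1, v_2, \ldots\}$ arbitrarily and take the singleton partition $V_i = \{v_i\}$. Each induced subgraph $D'[V_i]$ consists of a single isolated vertex, and therefore admits a trivial labeling with no infinite increasing directed path; in particular $D'[V_i] \notin \mathcal{V}^D_\mathbb{N}$. Moreover, for each $i$ and $x = v_i$,
$$\big\{(x,y) \in L : y \in \bigcup_{j > i} V_j\big\} \ \subseteq \ L_{v_i},$$
which is finite by the above observation. The hypotheses of Lemma~\ref{lemmaforvertdi} are thus satisfied, and the lemma yields $D' \notin \mathcal{V}^D_\mathbb{N}$; that is, there exists a labeling $\phi$ of $V$ by $\mathbb{N}$ under which $D'$ contains no infinite increasing directed path, which is precisely what Claim~\ref{claim3.5} asserts.

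I do not foresee any real obstacle here. The technical work of converting ``each vertex has finitely many forward out-edges'' into ``a blocking labeling exists'' has already been performed inside the proof of Lemma~\ref{lemmaforvertdi}. The only content of the present step is the observation that the singleton partition makes the hypothesis of that lemma follow immediately from the finiteness of each $L_u$, which was the entire reason for restricting attention to the subgraph $D'$ consisting of $\prec$-$\leq$-agreeing edges. The nontrivial use of this labeling $\phi$ will come in the next stage of the Reiterman-style argument, where $\phi$ is combined with a separate labeling of $E \setminus L$ to produce an edge-labeling of $D$ with no infinite increasing directed path, thereby contradicting hypothesis~(3).
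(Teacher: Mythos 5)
Your argument does correctly establish the literal vertex-labeling assertion $D'=(V,L)\notin\mathcal{V}^D_\mathbb{N}$: with the singleton partition $V_i=\{v_i\}$ each part trivially fails to be in $\mathcal{V}^D_\mathbb{N}$, the forward edge sets are contained in the finite sets $L_{v_i}$, and Lemma~\ref{lemmaforvertdi} applies (this is exactly how Corollary~\ref{cor1} is proved). The trouble is that this is not the labeling the claim has to deliver. We are inside the proof of $(3)\Rightarrow(1)$ of Proposition~\ref{divertN}, and the $\phi$ produced by Claim~\ref{claim3.5} is used afterwards as an \emph{edge} labeling of $L$: the labeling $\gamma$ of $E$ is required to satisfy $\gamma(e)<\gamma(f)$ iff $\phi(e)<\phi(f)$ for $e,f\in L$, and the concluding argument needs that no infinite directed path with increasing \emph{edge} labels lies entirely in $L$. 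The claim's wording is admittedly misleading (since $\mathcal{V}^D_\mathbb{N}$ is defined via vertex labelings), but the paper's own proof accordingly constructs an edge labeling: using the finiteness of each $L_u$ it partitions $L$ into finite sets $L_1,L_2,\ldots$ so that any edge continuing an edge of $L_i$ lies in some $L_j$ with $j\leq i+1$, and then labels the blocks in an alternating fashion so that an increasing path that enters $L_{2i-1}\cup L_{2i}$ can never use an edge of $L_j$ with $j\geq 2i+1$, hence is finite.

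Your closing remark, that your $\phi$ ``will be combined with a separate labeling of $E\setminus L$ to produce an edge-labeling of $D$,'' is where the gap shows: a vertex labeling of $(V,L)$ cannot be merged with a labeling of the edges in $E\setminus L$ into the required edge labeling $\gamma$ of $E$, and Lemma~\ref{lemmaforvertdi} says nothing about edge labelings. Nor can you pass from the vertex-labeling statement to the edge-labeling one by citing the equivalence of $(2)$ and $(3)$, since that equivalence is precisely what is being proved. To repair the step you must directly exhibit an $\mathbb{N}$-labeling of the edges of $L$ with no infinite increasing directed path, e.g.\ via the layered partition of $L$ described above; the appeal to Lemma~\ref{lemmaforvertdi} with singleton parts does not accomplish this.
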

\begin{proof}[Proof of Claim~\ref{claim3.5}]
Since the out-degree of every vertex in $V$ is finite in $L$, $L$ can be partitioned into finite sets $L_1, L_2, \ldots$ so that for all edges $(v,u)\in L_i$, any edge $(u,w)$ is in $L_j$ for some $j\leq i+1$.
Give to $L$ a labeling $\phi$ by $\mathbb{N}$ so that for all $i\in \mathbb{N}$,
$$\phi(e) < \phi(f) \text{ for all } e\in L_{2i-1}, f\in L_{2i},$$
$$\phi(f) > \phi(e) \text{ for all } f\in L_{2i}, e\in L_{2i+1}.$$
If an increasing directed path in $L$ uses an edge from any $L_{2i-1}$ or $L_{2i}$, then the path can use no edges from $L_j$ for any $j\geq 2i+1$. Such a path must be finite.\end{proof}
Now we give a labeling $\psi$ to $E\setminus L$, where the orderings $\leq$ and $\prec$ disagree.
Let
$$K_i = \big \{ (v,i), (i,v) \in E\setminus L:  v\leq i\big \}.$$
Note that sets $K_i$ are finite and $K_1, K_2, \ldots $ forms a partition of $E \setminus L$.
Suppose $\psi$ is a labeling defined on $\bigcup_{j< i} K_j $, and proceed by induction.
Define $\psi$ on $K_i$ so that
$$\text{ for all } e\in K_i \text{ and all } f\in \bigcup_{j< i} K_j, \psi(e) > \psi(f),$$
$$\text{ for all } e,f\in K_i, \psi(e) > \psi(f) \text{ iff } \ell(e) \prec \ell(f).$$
Finally, let $\gamma$ be a labeling of $E$ by $\mathbb{N}$ so that
$$\text{ for all } e,f\in L, \gamma(e) < \gamma(f) \text{ iff } \phi(e) < \phi(f),$$
$$\text{ for all } e,f\in E\setminus L, \gamma(e) < \gamma(f) \text{ iff } \psi(e) < \psi(f), \text{ and}$$
$$ \text{ for all } (u,v)\in E\setminus L, (v,w)\in L_v, \gamma((v,w)) < \gamma((u,v)),$$
which is possible because $L_v$ is a finite set.

Observe that if an edge $(u,v)\in E\setminus L$, then for all $(v,w)\in E$ with $\gamma((u,v)) < \gamma((v,w))$, $(v,w)\in E\setminus L$.
Indeed, for all $(v,w)\in L$, $(v,w)\in L_v$, so $\gamma((v,w)) < \gamma((u,v))$.
Hence an increasing directed path in $D$ with respect to $\gamma$ cannot ever use an edge of $L$ after using an edge of $E\setminus L$.

\begin{claim}\label{claim4}
If $(u,v),(v,w)\in E\setminus L$ and $\gamma((u,v)) < \gamma((v,w))$, then $\ell((u,v)) \succeq \ell((v,w))$.
\end{claim}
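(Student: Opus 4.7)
The plan is to unwind the definition of $\gamma$ on $E \setminus L$ and then do a short case analysis based on which endpoint of $(u,v)$ carries the larger $\leq$-label. Set $e=(u,v)$ and $f=(v,w)$, and recall that on $E\setminus L$ the labeling $\gamma$ agrees in order with $\psi$. From the two defining rules of $\psi$, the hypothesis $\psi(e) < \psi(f)$ is equivalent to either
\textbf{(a)} $r(e) < r(f)$ in the ordering $\leq$, or
\textbf{(b)} $r(e) = r(f)$ and $\ell(e) \succ \ell(f)$.

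Case (b) is immediate: $\ell(e) \succ \ell(f)$ already gives the required $\ell(e) \succeq \ell(f)$. For case (a), I would split on which endpoint of $e$ is $r(e)$. If $r(e) = v$, then $\ell(e) = u$, and because $e \in E\setminus L$ one has $u = \ell(e) \succ r(e) = v$; the inequality $r(f) > r(e) = v$ then forces $r(f) = w$ (not $v$), so $\ell(f) = v$, and we conclude $\ell(e) = u \succ v = \ell(f)$. If instead $r(e) = u$, then $\ell(e) = v$ and $u > v$ in $\leq$; the inequality $r(f) > u > v$ again forces $r(f) = w$, so $\ell(f) = v = \ell(e)$, and equality holds.

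In all cases we obtain $\ell(e) \succeq \ell(f)$, proving the claim. The only point one has to watch is the interplay between the two orderings $\leq$ and $\prec$: the definition $E \setminus L$ forces the $\leq$-smaller endpoint of an edge to be its $\prec$-larger endpoint, and this is exactly what is used to extract the $\prec$-comparison of $\ell(e)$ and $\ell(f)$ from a $\leq$-comparison of $r(e)$ and $r(f)$. No significant obstacle is anticipated; the statement is essentially a bookkeeping consequence of the construction of $\psi$ and of the partition $E = L \sqcup (E \setminus L)$.
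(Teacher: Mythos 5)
Your proof is correct and takes essentially the same route as the paper's: both arguments reduce the hypothesis $\gamma((u,v))<\gamma((v,w))$ to the definition of $\psi$ on the blocks $K_i$ (ordered by the $\leq$-larger endpoint, with the $\prec$-reversal on $\ell$ inside a block) together with the fact that $\ell(e)\succ r(e)$ for $e\in E\setminus L$. The paper organizes the bookkeeping as a case analysis on the $\leq$-order of $u,v,w$, while you case on $r(e)$ and $r(f)$, but the content is the same.
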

\begin{proof}
If $u<v<w$, then $u\succ v$ because $(u,v)\in E\setminus L$, so $\ell((u,v)) \succeq \ell((v,w))$.
If $u<v$ and $w<v$, then $(u,v),(v,w)\in K_v$, so the claim follows from the definition of $\gamma$ and $\psi$.
If $v<u$ and $v<w$, then $\ell((u,v)) = \ell((v,w))$.
Finally, it is not possible to have $w<v<u$ because this would imply, by a definition of $\psi$, that $\psi((u,v))>\psi((v,w))$ and hence $\gamma((u,v))>\gamma((v,w))$.
\end{proof}

We claim that $D$ does not contain an infinite increasing directed path with respect to $\gamma$. Suppose that $\{(v_i, v_{i+1})\}_{i=1}^\infty$ is such a path. 
This path cannot contain edges only from $L$, because $\gamma$ extends $\phi$, so there exists a $j$ such that for all $i\geq j$ edges $(v_i,v_{i+1})\in E\setminus L$.
Claim~\ref{claim4} gives that $\ell((v_i, v_{i+1})) \succeq \ell((v_{i+1}, v_{i+2})) \succeq \cdots$ for all $i\geq j$.
Since $\prec$ is a well-ordering, the set $\{\ell((v_i, v_{i+1}))\}_{j}^\infty$ contains a minimal element, so for some $k$, $\ell((v_k, v_{k+1})) = \ell((v_{k+1}, v_{k+2})) = \cdots$.
But this is impossible since no more than two edges of a path can use the same vertex, giving a contradiction.
\end{proof}

\begin{proof}[Proof of Proposition~\ref{thm:oriented_v_z_1}]
Suppose (1), and let $\phi$ be any labeling of $V$ by $\mathbb{Z}$.
Set
$$V_1 = \{v\in V: \phi(v) > 0\},$$
$$V_2 = \{v\in V: \phi(v) \leq 0\}.$$

By $(1)$ there exists a set $W_1\subseteq V_1$ so that for all $v\in W_1$, $v$ has infinite out-degree in $D[W_1]$.
Then by Theorem~\ref{divertN}, there exists an infinite increasing directed path in $D[W_1]$ and hence in $D$.

Now suppose $(1)$ does not hold.
Then there exists a partition of $V$ into infinite sets $V_1, V_2$ so every subset $W_1\subseteq V_1$ has a vertex of finite out-degree in $D[W_1]$.
Then by Theorem~\ref{divertN}, there exists a vertex-labeling of $D[V_1]$ by $\nn$ containing no infinite increasing path.
Label the vertices of $V_2$ arbitrarily by $\mathbb{Z}\setminus \nn$. Since all vertices with positive label are contained in $D[V_1]$, and since $D[V_1]$ contains no infinite increasing path, neither does $G$.

\end{proof}

\begin{proof}[Proof of Proposition~\ref{thm:oriented_v_z_2}] The proof is analogous to the proof of of Theorem~\ref{thm:graph_v_z_2}.
One implication is clear.
Suppose {(1)} and let $\phi$ be any labeling of $V$ by $\mathbb{Z}$.
We let
$$V_1 = \{v\in V: \phi(v) \geq 0\},$$
$$V_2 = \{v\in V: \phi(v) < 0\}.$$
Let $W_1$ and $W_2$ be the subsets of $V_1$ and $V_2$ respectively that are ensured by {(1)}, and let $w_1\in W_1$ and $w_2 \in W_2$ be vertices of some edge $(w_2,w_1)$.
Let $v_1 = w_1$, and for $i\geq 1$, let $v_{i+1}$ be some vertex in $W_1$ so that $(v_i,v_{i+1})\in E$ and  $\phi(v_{i+1}) > \phi(v_{i})$.
Since each $v_{i}$ has infinitely many such neighbors, $\{v_i\}_{i=1}^\infty$ forms an infinite increasing directed path in $W_1$.
Similarly let $v_0 = w_2$ and for $i\leq 0$, let $v_{i-1}$ be a vertex in $W_2$ so that $(v_{i-1},v_i)\in E$ and $\phi(v_{i-1}) < \phi(v_{i})$.
Since $(v_0,v_1)\in E$, the path formed by $\{v_i: -\infty < i < \infty\}$ is an infinite increasing two-sided directed path in $D$.

Now suppose $(1)$ does not hold. Then there are two possible cases.
\\ {\bf Case I} - there exists a partition of $V$ into infinite sets $V_1, V_2$ such that either no subset $W_1 \subseteq V_1$ induces a subgraph with all infinite out-degrees, or no subset $W_2 \subseteq V_2$ induces a subgraph with all infinite in-degrees.
In the former case, by Proposition~\ref{thm:oriented_v_z_1}, there is a labeling of $V$ by $\mathbb{Z}$ with no infinite increasing directed path.  Such a labeling clearly forbids an infinite increasing two-sided directed path. In the latter case, let $\overline{D}$ be a directed graph defined by
$$V(\overline{D}) = V(D)$$
$$E(\overline{D}) = \lbrace (u,v) : (v,u)\in E(D)\rbrace$$
Then by Proposition~\ref{thm:oriented_v_z_1}, there is a labeling $\phi$ of $V(\overline{D})$ by $\mathbb{Z}$ with no infinite increasing directed path.  Define the labeling $\psi$ of $V(D)$ by $\mathbb{Z}$ such that
$$\psi(v) = -\phi(v)$$
for all $v\in V(D)$.  Clearly $D$ contains no infinite decreasing \emph{anti-directed path} (that is, no path $\lbrace (v_{i-1},v_i): \psi(v_{i-1})< \phi(v_i),\, i\leq 1\rbrace$), it contains no infinite increasing two-sided directed path.

{\bf Case II} - there exists a partition of $V$ into infinite sets $V_1, V_2$ so that for any subsets $W_1\subseteq V_1$ with all vertices having infinite out-degree in $W_1$ and $W_2 \subseteq V_2$ with all vertices having infinite in-degree in $W_2$, there is no edge between vertices of $W_1$ and $W_2$.
For $i=1,2$, consider the family $\mathcal{W}_i$ of such sets $W_i$, where $\mathcal{W}_i$ is partially ordered by inclusion, and observe that each $\mathcal{W}_i$ contains a maximal element. Fix maximal subsets $W_1\in \mathcal{W}_i$ and $W_2\in \mathcal{W}_i$.
With $U_i = V_i\setminus W_i$ for $i=1,2$, the maximality of $W_i$ implies
\begin{enumerate}
\item[(i)] every nonempty subset $U \subseteq U_1$ contains a vertex $v\in U$ with finite out-degree in $G[U]$,
\item[(i)] every nonempty subset $U \subseteq U_2$ contains a vertex $v\in U$ with finite in-degree in $G[U]$,
\item[(iii)] for every $v\in U_1$, there are finitely many $u\in W_1$ with $(v,u)\in E$,
\item[(iv)] for every $v\in U_2$, there are finitely many $u\in W_2$ with $(u,v)\in E$.
\end{enumerate}
By (i), Theorem~\ref{divertN} implies that there is a labeling $\phi_U$ of $U_1$ by $\mathbb{N}$ that forbids infinite increasing paths in $U_1$.
By (iii) we can form a labeling $\phi$ of $V_1$ by positive integers that preserves $\phi_U$ (i.e. $\phi(u_1) < \phi(u_2)$ iff $\phi_U(u_1) < \phi_U(u_2)$ for all $u_1, u_2 \in U_1$) and where $\phi(u) > \phi(w)$ for all $u \in U_1$, $w \in W_1$  with $(u,w)\in E$.
Since $\phi$ preserves $\phi_U$, $U_1$ contains no infinite increasing directed path with respect to $\phi$.

Now follow a similar procedure for $V_2$ and $W_2$, with $U_2 = V_2 \setminus W_2$.
Here we label $V_2$ by \emph{negative} integers to prevent any infinite \emph{decreasing} anti-directed path.
Construct a labeling $\psi$ of $V_2$ by negative integers so that for any $u\in U_2$, $w\in W_2$ with $(wu)\in E$, $\psi(u) < \psi(w)$, and so that $U_2$ contains no infinite decreasing anti-directed path with respect to $\psi$.

Finally, for all $v\in V(G)$, let
\[ \gamma(v) = 
	\begin{cases} 
      \phi(v) & v\in V_1 \\
      \psi(v)+1 & v\in V_2,
   \end{cases}
\]
where we add 1 to $\psi(v)$ so that 0 is used as a label.
Note that $\gamma$ is a labeling of $V(G)$ by $\mathbb{Z}$ and $\gamma(V_1)=\mathbb{N}, \gamma(V_2)=\mathbb{Z}\setminus \mathbb{N}$.
Observe the following with respect to $\gamma$:
\[ (3)
	\begin{cases} 
      \text{there are no infinite increasing directed paths in } U_1, \\
      \text{there are no infinite decreasing anti-directed paths in } U_2, \\
      \text{there are no directed edges from } W_2 \text{ to } W_1. \\
   \end{cases}
\]
We claim that there is no infinite two-sided directed path in $G$ with respect to $\gamma$.
Suppose $\{(v_i,v_{i+1}): i\in \nn\}$ is such a path, and without loss of generality assume $v_0\in V_2$, $v_1\in V_1$.
If $v_1\in U_1$, then all vertices $v_2, v_3, \ldots$ must be in $U_1$, because $\gamma(v)>\gamma(w)$ for all $v\in E_1$ and $w\in W_1$, and so $v_1, v_2, \ldots$ is an infinite increasing directed path in $U_1$, contradicting (3).
Similarly if $v_0\in U_2$, then $v_{-1}, v_{-2}, \ldots \in U_2$, so there is an infinite decreasing anti-directed path in $U_2$, contradicting (3).
Therefore, $v_1\in W_1, v_0 \in W_2$, which contradicts (3) once again.
Therefore $G$ contains no infinite increasing two-sided directed path.

\end{proof}

\begin{proof}[Proof of Proposition~\ref{oriented_edgeZ1sided}]
This proposition follow directed from Corollary~\ref{cor2}
\end{proof}

\end{document}